\newcommand{\bbN}{{\mathbb N}}
\newcommand{\bbQ}{{\mathbb Q}}
\newcommand{\bbR}{{\mathbb R}}
\newcommand{\bbZ}{{\mathbb Z}}
\newcommand{\defq}{\mathrel{\mathop:}=}
\newcommand{\im}{\operatorname{im}}
\newcommand{\vol}{\operatorname{vol}}
\newcommand{\mass}{\operatorname{mass}}
\newcommand{\jacobian}{\operatorname{Jac}}
\newcommand{\cell}{\ensuremath{\mathrm{cell}}}
\newcommand{\lip}{\ensuremath{\mathrm{lip}}}
\newcommand{\norm}[1]{{\left\lVert #1\right\rVert}}
\newtheorem*{mthm}{Main Theorem}
\newtheorem*{othm}{Theorem}
\newtheorem{theorem}{Theorem}[section]
\newtheorem{lemma}[theorem]{Lemma}
\theoremstyle{definition}
\newtheorem{defn}[theorem]{Definition}
\newtheorem{convention}[theorem]{Convention}
\newtheorem{example}[theorem]{Example}
\newtheorem{remark}[theorem]{Remark}
\begin{document}

\title[Waist inequalities in codimension two]{Uniform waist inequalities in codimension two for manifolds with Kazhdan fundamental group}

\author{Uri Bader}
\address{Weizmann Institute of Science, Israel}
\email{uri.bader@gmail.com}
\author{Roman Sauer}
\address{Karlsruhe Institute of Technology, Germany}
\email{roman.sauer@kit.edu}

\subjclass[2000]{Primary 22D55; Secondary 58E99}
\keywords{Kazhdan property T, Higher-dimensional expander}

\begin{abstract}
Let $M$ be a closed Riemannian manifold with Kazhdan fundamental group. 
It is well known that the Buser-Cheeger inequality yields a uniform waist inequality in codimension~$1$ for the finite covers of~$M$, which is basically another way of saying that the finite covers form an expander family. We show that 
the finite covers of~$M$ also satisfy a uniform waist inequality in codimension~$2$. 
\end{abstract}

\maketitle


\section{Introduction}

Gromov's waist inequality~\cites{gromov_filling,gromov_waist} for the sphere $S^n$ is a fundamental result in geometry. It says that the maximal volume of a fiber of a (generic) map from $S^n$ to $d$-dimensional Euclidean space is at least the $(n-d)$-dimensional (Hausdorff) volume of an equator sphere $S^{n-d}$.  

A family $\mathcal{F}$ of closed  $n$-dimensional Riemannian manifolds satisfies a \emph{uniform waist inequality in codimension~$d$}, where $1\le d<n$, if there is a constant~$C_{\mathcal{F}}>0$ such that for every $M\in \mathcal{F}$ and for every real analytic map $f\colon M\to \bbR^d$ there is $x\in \bbR^d$ such that the $(n-d)$-dimensional volume of the fiber of~$x$ satisfies 
\[ \vol_{n-d}\bigl(f^{-1}(\{x\})\bigr)\ge C_{\mathcal{F}}\cdot\vol(M).\]
By Whitney's theorem every smooth manifold carries a unique real analytic structure, and smooth maps can be arbitrarily well approximated by real analytic maps. 


A uniform waist inequality in codimension~$d$ is the Riemannian analog of a \mbox{$d$-di}mensional topological expander. The first explicit construction of expanders is by Margulis~\cite{margulis} and relies on the relative Kazhdan property. 
In \cite{Alon-Milman} Alon and Milman show that the Cayley graphs of finite quotients of a Kazhdan group form an expander family. The following Riemannian analog of this fact 
is a direct consequence of theorems by Buser and Cheeger relating the first eigenvalue of the Laplacian and the Cheeger constant~\cites{buser,cheeger}, in particular, of Buser's theorem.  
\begin{othm}[Corollary of the Cheeger-Buser inequality]
The family of finite covers of a closed Riemannian manifold with Kazhdan fundamental group (or only property~$\tau$) satisfies a uniform waist inequality in codimension~$1$.
\end{othm}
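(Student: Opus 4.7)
The plan is to chain together three classical steps: property $\tau$ gives a uniform spectral gap on finite covers; Buser's inequality converts this into a uniform lower bound on the Cheeger isoperimetric constant; and the median-level-set trick converts that into a codimension-one waist bound for real analytic maps.

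\textbf{Step 1 (Property $\tau$ gives a uniform spectral gap).} Let $\{M_i\to M\}$ be the family of finite covers, corresponding to finite-index subgroups $\Gamma_i<\Gamma=\pi_1(M)$. Property $\tau$ for $\Gamma$ with respect to $\{\Gamma_i\}$ asserts exactly that the quasi-regular representations of $\Gamma$ on the orthogonal complement of the constants in $L^2(\Gamma/\Gamma_i)$ have no almost-invariant vectors, uniformly in $i$. Translated to the Laplace operator on $L^2(M_i)=L^2(M)\oplus L^2_0(M_i)$, this gives a uniform lower bound $\lambda_1(M_i)\geq\lambda_0>0$. Kazhdan's property T implies property $\tau$, so both hypotheses in the theorem yield the same conclusion.

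\textbf{Step 2 (Spectral gap $\Rightarrow$ Cheeger gap).} All covers $M_i$ share the Ricci curvature lower bound of $M$ and have the same dimension $n$. Buser's inequality, in the form $\lambda_1(N)\leq C_1 h(N)+C_2 h(N)^2$ with $C_1,C_2$ depending only on $n$ and the lower Ricci bound, therefore applies uniformly across the family. Combining with Step~1 gives a uniform bound $h(M_i)\geq h_0>0$ on the Cheeger isoperimetric constant.

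\textbf{Step 3 (Cheeger constant $\Rightarrow$ waist inequality in codimension one).} Fix $M_i$ and a real analytic $f\colon M_i\to\bbR$. Since $f$ is real analytic on a compact manifold, the set of critical values is finite, hence the set of regular values is open and of full measure. The function $t\mapsto\vol\bigl(f^{-1}((-\infty,t])\bigr)$ is continuous and non-decreasing, so by the intermediate value theorem we may pick a regular value $t_0$ of $f$ with
\[
\vol\bigl(f^{-1}((-\infty,t_0])\bigr)\in\bigl[\tfrac{1}{3}\vol(M_i),\tfrac{2}{3}\vol(M_i)\bigr].
\]
Setting $A=f^{-1}((-\infty,t_0])$, regularity of $t_0$ ensures $\partial A=f^{-1}(t_0)$ is a smooth hypersurface, so the isoperimetric characterization of the Cheeger constant yields
\[
\vol_{n-1}\bigl(f^{-1}(t_0)\bigr)=\vol_{n-1}(\partial A)\geq h(M_i)\cdot\min\bigl(\vol(A),\vol(M_i\setminus A)\bigr)\geq\tfrac{h_0}{3}\vol(M_i).
\]
Hence $C_{\mathcal{F}}=h_0/3$ works for the whole family.

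No step is a genuine obstacle; the argument is essentially a translation between three well-known objects (spectral gap, Cheeger constant, waist). The only technical point worth mentioning is the need to ensure $t_0$ is a regular value of $f$ (so that the Cheeger bound applies to an honest hypersurface), and this is precisely why the waist inequality is formulated for real analytic maps, where critical values are discrete and can be avoided with an arbitrarily small perturbation of the chosen sublevel.
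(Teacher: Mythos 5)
Your argument is correct and is exactly the route the paper has in mind: the theorem is stated there without proof as a direct consequence of property~$\tau$ giving a uniform spectral gap on finite covers, Buser's inequality converting this into a uniform Cheeger constant, and the median-level-set argument producing the codimension-one waist bound. The only points worth a remark are that Step~1 is the classical Brooks--Burger comparison between the combinatorial and Riemannian spectral gaps (slightly more than a formal translation), and that one really only needs density of regular values plus continuity of $t\mapsto\vol\bigl(f^{-1}((-\infty,t])\bigr)$, both of which you have.
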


The surprising discovery in this paper is that the Kazhdan property gives an extra dimension for free. 

\begin{mthm}
The family of finite covers of a closed Riemannian manifold with Kazhdan fundamental group satisfies a uniform waist inequality in codimension~$2$.
\end{mthm}

Interesting examples of such manifolds are compact locally symmetric spaces of higher rank but, of course, every finitely presented Kazhdan group is the fundamental group of a closed smooth (in general, non-aspherical) manifold.  
If the group is infinite and residually finite then this manifold admits countably many finite covers.
Finitely presented, residually finite Kazhdan groups are in abundance,
the groups $\text{SL}_n(\mathbb{Z})$ for $n\geq 3$ being prominent examples.

In~\cite{fraczyk+lowe}, Fraczyk-Lowe prove that the entire family of finite volume  octonionic hyperbolic spaces (whose fundamental groups always have the Kazhdan property) satisfies a uniform waist inequality in codimension~$2$. While their methods are differential-geometric, we use methods from functional analysis and algebraic topology. 
Our proof has four main steps:

\begin{enumerate}
    \item\label{it: R-expansion} Use the Kazhdan property for proving $\mathbb{R}$-coboundary expansion.
    \item\label{it: Z-expansion} Upgrade $\mathbb{R}$-coboundary expansion to $\mathbb{Z}$-coboundary expansion.
    \item\label{it: isoperimetric} Derive isoperimetric inequalities from $\mathbb{Z}$-coboundary expansion.
    \item\label{it: waist} Derive waist inequalities from isoperimetric inequalities.
\end{enumerate}

The paper is decomposed into sections in accordance with the above list. 
The most innovative achievements of this paper are~\eqref{it: R-expansion} and~\eqref{it: Z-expansion}, the proof of the $\mathbb{Z}$-coboundary expansion. 

For~\eqref{it: isoperimetric}, we use Poincaré duality and 
a standard Federer-Fleming deformation argument 
to deduce from the $\mathbb{Z}$-coboundary expansion a \emph{uniform isoperimetric inequality} (see Section~\ref{sec:Lip to II}). 

For~\eqref{it: waist}, we implement a version of Gromov's filling method in~\citelist{\cite{gromov1}\cite{gromov2}\cite{gromov_filling}*{Appendix~F}}. The restriction to analytic maps or some other class of generic smooth maps is typical for this method but might not be necessary. We do not pursue this question here. The only use of analyticity is in Lemma~\ref{lem: real analytic maps} which appears in Gromov's work. We think that our particular implementation of the filling method is short and elegant. It has some similarity with the proof of Gromov's 
topological overlap theorem, which relies on uniform  $\mathbb{F}_2$-coboundary expansion in finite covers~\cites{gromov1, gromov2, overlap} but could be formulated with an appropriate uniform $\bbZ$-coboundary expansion. The proof of the Main Theorem, given in Section~\ref{sec: iso to waist}, is by contradiction and shares the following rough outline with other variations of the filling method: 
assuming smallness of fibers we deduce having a non-trivial top dimension cycle of arbitrarily small volume, which is absurd if we are working with a discrete group of coefficients. 

We now give an outline of~\eqref{it: R-expansion} and~\eqref{it: Z-expansion} and explain how the Kazhdan property gifts an extra codimension in the Main Theorem when compared to what we obtain from the Cheeger-Buser inequality. 
 The Kazhdan property provides some form of uniform coboundary expansion but a priori over $\bbR$ not $\bbZ$. Moreover, the Kazhdan property is about the cohomology in degree~$1$ with \emph{unitary coefficients} 
being Hausdorff (actually zero), while for the above mentioned uniform expansion we need to deal with \emph{$L^1$-coefficients} and with an additional Hausdorff property in degree~$2$. The latter is responsible for the surprising gain of an extra codimension.  
We take care of this in Section~\ref{sec: coboundary expansion}. Important ingredients are the Kazhdan property for $L^1$-Banach spaces~\cite{BGM} for the switch to $L^1$-coefficients and an ultraproduct technique adapted from~\cite{BS} for the Hausdorff property in degree~$2$.
A bridge between $\mathbb{Z}$-coefficients and $\mathbb{R}$-coefficients, using ideas from \emph{Integral Linear Programming}, is provided in Section~\ref{sec: total unimodularity}. See Theorem~\ref{thm:ZT} for a precise statement of the uniform $\mathbb{Z}$-coboundary expansion result.


\subsection*{Acknowledgment}
We wish to thank Mikolaj Fraczyk, Bo'az Klartag and Alex Lubotzky for various stimulating discussions. 
We thank the anonymous referee for a very thorough reading and many helpful suggestions that improved the presentation and clarity of this manuscript.
It is a special pleasure to thank Saar and Shaked Bader for going over a first draft of the paper, making several suggestions and pointing up relevant references.
U.B was funded by the BSF grant 713508.
R.S. was funded by the Deutsche Forschungsgemeinschaft (DFG, German Research 
Foundation) – 441426599.

\section{Coboundary expansion for $L^1$-spaces}
\label{sec: coboundary expansion}
The goal of this section is to prove that Kazhdan groups give rise to $1$-coboundary expanders with real coefficients.
Vector spaces are assumed to be defined over the reals.

\subsection{Abstract L-spaces}
Abstract L-spaces are an abstract 
characterization of spaces  $L^1(\Omega)$, where $\Omega$ is a measure space. 
The latter is called a \emph{concrete L-space}. 
The advantage of this notion, which we review below, is that it is well adapted to taking ultraproducts. 

An \emph{ordered (real) vector space} is a vector space $V$ with an order $\leq$ that is translation invariant and scale invariant for positive scalars.
The \emph{positive cone}, $V^+=\{x\mid x\geq 0\}$, determines the order: $x\leq y$ if and only if $y-x\in V^+$. 
If $(V,\leq)$ is a lattice in the sense of order theory, that is, every two elements $x,y$ have a \emph{meet} $x\wedge y$ and a \emph{join} $x \vee y$, we say that $(V,\leq)$ is a \emph{vector lattice}. In this case, the \emph{absolute value} of $x\in V$ is defined as $|x|=x \vee -x$
and an element $x_0\in V^+$ is called a \emph{unit} if $x_0\wedge y=0$ for $y\in V^+$ implies that $y=0$.

If $(V,\leq)$ is a vector lattice and $V$ carries a Banach space norm $\|\cdot \|$ such that $|x|\leq |y|$ implies $\|x\|\leq \|y\|$, we call $(V,\leq, \|\cdot \|)$ a \emph{Banach lattice}. 
The positive cone $V^+$ is closed and the lattice operations are continuous in this case. 

An \emph{abstract L-space} is a Banach lattice whose norm is additive on the positive cone, that is, $\|x+y\|=\|x\|+\|y\|$ for $x,y\in V^+$. 
We distinguish the class $\mathcal{L}$ consisting of abstract L-spaces that satisfy the additional axiom
\[ x\wedge y = 0  ~\Rightarrow~  \|x+y\|=\|x-y\|. \]

Let $\Omega$ be any measured space.
The (real) vector space $L^1(\Omega)$, endowed with the a.e pointwise order and the $L^1$-norm is in the class $\mathcal{L}$.
Such spaces are called \emph{concrete L-spaces}.
The following realization theorem is due to Kakutani.

\begin{theorem}[Kakutani~\cite{Kakutani}*{Theorems~4 and 7}]\label{thm: realization thm}
Every separable abstract L-space has a unit.
Every abstract L-space in the class $\mathcal{L}$ that has a unit is isometric and lattice isomorphic (by the same map) to a concrete L-space. 
\end{theorem}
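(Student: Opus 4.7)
My plan is to prove the two claims separately.

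For the first, fix a dense sequence $(y_n)_{n\ge 1}$ in the positive cone $V^+$ with $\|y_n\|\le 1$ and set $x_0\defq\sum_{n\ge 1}2^{-n}y_n$. The series converges absolutely and $V^+$ is closed, so $x_0\in V^+$. If $y\in V^+$ and $x_0\wedge y=0$, the inequality $0\le 2^{-n}y_n\le x_0$ forces $y_n\wedge y=0$ for every $n$; density of $(y_n)$ together with the continuity of the lattice operations yields $z\wedge y=0$ for all $z\in V^+$, and the choice $z=y$ gives $y=0$. Thus $x_0$ is a unit.

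For the second, let $V\in\mathcal{L}$ have unit $x_0$ with $\|x_0\|=1$. Consider the principal order ideal $I(x_0)\defq\{x\in V:|x|\le c\,x_0\text{ for some }c\ge 0\}$ with the order unit norm $\|x\|_\infty\defq\inf\{c\ge 0:|x|\le c\,x_0\}$; this is an Archimedean Riesz space with strong order unit $x_0$, and by Kakutani's companion M-space representation theorem its $\|\cdot\|_\infty$-completion is lattice-isometric to $C(K)$ for a compact Hausdorff space $K$, with $x_0$ sent to the constant function $1$. The functional $x\mapsto\|x\|$ is additive on $I(x_0)^+$ by the L-space axiom and bounded above by $\|\cdot\|_\infty$ on $I(x_0)$ (since $|x|\le\|x\|_\infty x_0$ implies $\|x\|\le\|x\|_\infty\|x_0\|=\|x\|_\infty$), so it extends to a positive continuous linear functional on $C(K)$, represented by a finite positive Radon measure $\mu$ via Riesz--Markov. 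The composition $I(x_0)\hookrightarrow C(K)\hookrightarrow L^1(K,\mu)$ is a lattice homomorphism. Given $x\in I(x_0)$ with Jordan decomposition $x=x^+-x^-$ and $x^+\wedge x^-=0$, the $\mathcal{L}$ axiom combined with L-space additivity gives $\|x\|_V=\|x^+-x^-\|_V=\|x^++x^-\|_V=\|x^+\|_V+\|x^-\|_V=\int x^+\,d\mu+\int x^-\,d\mu=\|x\|_{L^1}$, so the embedding is isometric; its image is $L^1$-dense because $I(x_0)$ is $\|\cdot\|_\infty$-dense in $C(K)$ and $C(K)$ is $L^1$-dense in $L^1(K,\mu)$.

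The main obstacle is the final step: showing that $I(x_0)$ is $\|\cdot\|_V$-dense in $V$, so that passing to the $V$-closure promotes the above into a lattice isometric isomorphism $V\cong L^1(K,\mu)$. I plan to handle this in two stages. First, I would prove order continuity of the L-norm: for $x_\alpha\downarrow 0$, the additivity axiom yields $\|x_\alpha-x_\beta\|=\|x_\alpha\|-\|x_\beta\|$ whenever $\alpha\le\beta$, so $(x_\alpha)$ is norm-Cauchy with some limit $\tilde x\in V^+$; since $V^+$ is closed and $x_\beta\le x_\alpha$ for $\beta\ge\alpha$, we get $\tilde x\le x_\alpha$ for every $\alpha$, forcing $\tilde x=0$ and $\|x_\alpha\|\to 0$. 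Second, with order continuity in hand, the norm closure $\overline{I(x_0)}$ is a closed ideal and hence a band; but the band generated by $x_0$ is $\{x_0\}^{\perp\perp}$, and the unit axiom gives $\{x_0\}^\perp=\{0\}$, so this band is all of $V$. Therefore $\overline{I(x_0)}=V$, completing the representation.
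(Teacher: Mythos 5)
This statement is not proved in the paper at all: it is quoted as Kakutani's theorem (Theorems 4 and 7 of his 1941 paper), so there is no in-paper argument to compare against. Your proposal is a correct reconstruction along classical lines, and the overall architecture is sound: the weak-unit construction $x_0=\sum 2^{-n}y_n$ for the separable case, the principal ideal $I(x_0)$ with its order-unit norm represented as a dense sublattice of $C(K)$ via the Kakutani--Krein M-space theorem, the measure $\mu$ obtained from the L-norm functional via Riesz--Markov, the isometry computation $\|x\|_V=\|x^+\|_V+\|x^-\|_V=\int|x|\,d\mu$ (this is exactly where additivity and the $\mathcal{L}$-axiom enter; note the axiom is in fact automatic in any Banach lattice, since $x^+\wedge x^-=0$ gives $|x^+-x^-|=x^++x^-$), and finally order continuity of an L-norm to conclude that $\overline{I(x_0)}$ is a band, hence all of $V$ by the unit hypothesis, so the isometric lattice embedding extends to a surjective isometry $V\cong L^1(K,\mu)$. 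Two small points to tidy up: a dense sequence in $V^+$ cannot in general be chosen with $\|y_n\|\le 1$; take any dense sequence $(z_n)$ and set $y_n=z_n/(1+\|z_n\|)$, using that $y_n\wedge y=0$ iff $z_n\wedge y=0$, and at the end apply the conclusion to $z=y/(1+\|y\|)\le y$. Second, your argument leans on several standard Banach-lattice facts cited without proof (closure of an ideal is an ideal; closed ideals are bands when the norm is order continuous; the band generated by $x_0$ equals $\{x_0\}^{\perp\perp}$; the Kakutani--Krein representation itself), which is legitimate for a theorem the paper itself only cites, but these should be referenced explicitly (e.g.\ Meyer-Nieberg or Lindenstrauss--Tzafriri) if the proof were to be included; alternatively, the density of $I(x_0)$ in $V$ can be obtained more directly by showing $x\wedge nx_0\uparrow x$ for $x\in V^+$ and invoking the order continuity you already established.
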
 

The following result is straightforward and the reason for our use of abstract L-spaces. See~\cite{heinrich} as a background reference for ultralimits of Banach spaces. 

\begin{lemma} \label{lem:calL}
   The class $\mathcal{L}$ is closed under taking ultraproducts and $\ell^1$-direct sums.
\end{lemma}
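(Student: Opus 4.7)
The proof is a clean transfer argument: I would equip the Banach-space ultraproduct $V_\mathcal{U}=\prod_\mathcal{U} V_i$ of a family $(V_i)_{i\in I}$ in $\mathcal{L}$ with its canonical Banach-lattice structure and then verify the two additional axioms defining $\mathcal{L}$.

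First, following the standard treatment of ultralimits of Banach lattices alluded to in the paper via Heinrich, I would define the lattice operations on representatives: $[x_i]\vee[y_i] \defq [x_i\vee y_i]$ and $[x_i]\wedge[y_i] \defq [x_i\wedge y_i]$. These are well-defined modulo null sequences because of the Lipschitz estimate $\bigl\||x\vee y - x'\vee y'|\bigr\| \le \|x-x'\|+\|y-y'\|$, which holds in every Banach lattice: it follows from $x\vee y = \tfrac12(x+y+|x-y|)$ together with the triangle inequality $\bigl||x-y|-|x'-y'|\bigr|\le |x-x'|+|y-y'|$. The induced order makes $V_\mathcal{U}$ a Banach lattice, and crucially the positive cone is $V_\mathcal{U}^+=\{[x_i]:\text{some representative satisfies } x_i\ge 0\}$, because $[x_i]\ge 0 \iff [x_i]=[x_i]\vee 0 = [x_i\vee 0]$.

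Next, to check the additivity of the norm on $V_\mathcal{U}^+$, given $[x_i],[y_i]\in V_\mathcal{U}^+$ I replace the representatives by $x_i\vee 0, y_i\vee 0$ so that $x_i,y_i\ge 0$ in $V_i$, and then use the additivity axiom in each $V_i$:
\[ \|[x_i]+[y_i]\| = \lim_\mathcal{U}\|x_i+y_i\| = \lim_\mathcal{U}\bigl(\|x_i\|+\|y_i\|\bigr) = \|[x_i]\|+\|[y_i]\|. \]
For the implication $x\wedge y=0 \Rightarrow \|x+y\|=\|x-y\|$ in $V_\mathcal{U}$, I would invoke the vector-lattice identity $|a-b|=(a\vee b)-(a\wedge b)=a+b-2(a\wedge b)$, valid for $a,b\ge 0$; specializing to $a=[x_i], b=[y_i]$ with $a\wedge b=0$ yields $|a-b|=a+b$, and combined with $\|z\|=\||z|\|$ in the Banach lattice $V_\mathcal{U}$ this gives $\|[x_i]-[y_i]\|=\|[x_i]+[y_i]\|$.

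There is no essential obstacle: the content of the lemma is that both defining properties of $\mathcal{L}$ have the right first-order character to pass through ultraproducts — one via coordinate-wise positive representatives, the other as a universal vector-lattice identity — which is exactly the abstract-L-space feature being advertised before the lemma in preparation for Kakutani's representation theorem.
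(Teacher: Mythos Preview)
Your argument is correct and supplies exactly the routine verification the paper omits: it declares the lemma ``straightforward'' with a pointer to Heinrich and gives no proof, so there is nothing to compare against beyond the fact that you carry out the expected coordinate-wise transfer. One small remark: your verification of the second axiom $x\wedge y=0\Rightarrow\|x+y\|=\|x-y\|$ via the Riesz identity $|a-b|=a+b-2(a\wedge b)$ actually shows this implication holds in \emph{every} Banach lattice, not just those in $\mathcal{L}$, so that axiom is redundant and only the additivity of the norm on the positive cone needs checking --- but this only strengthens your conclusion.
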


The following theorem is an important ingredient for the proof of coboundary expansion. 

\begin{theorem}[\cite{BGM}*{Corollary D}]\label{thm: L1 Kazhdan}
A countable group is Kazhdan if and only if all its isometric actions on concrete L-spaces have a fixed point. 
\end{theorem}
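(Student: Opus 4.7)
My plan is to reduce this to the classical Delorme--Guichardet theorem (which characterizes Kazhdan groups as those whose affine isometric actions on Hilbert spaces have fixed points) by bridging between $L^1$ and Hilbert spaces in both directions.

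For the direction \emph{fixed-point property on concrete $L$-spaces $\Rightarrow$ Kazhdan}: given an affine isometric $G$-action $\alpha(g)\xi=\pi(g)\xi+b(g)$ on a separable real Hilbert space $H$, I would use the Gaussian embedding $\iota\colon H\hookrightarrow L^1(H,\gamma)$, $\iota(\xi)(x)=\langle\xi,x\rangle$, where $\gamma$ is a rotation-invariant Gaussian measure on $H$. The map $\iota$ is linear with $\|\iota(\xi)\|_1=c\|\xi\|_H$ for a universal $c>0$, and is $G$-equivariant when $G$ acts on $L^1(H,\gamma)$ by $f\mapsto f\circ \pi(g)^{-1}$ (isometrically, since $\gamma$ is rotation-invariant). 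Extending $\alpha$ via $\iota$ to an affine isometric action on $L^1(H,\gamma)$ and applying the hypothesis produces a fixed point, which forces $\iota\circ b$ hence $b$ to be bounded, and the usual Hilbertian argument (averaging the orbit of $0$) then produces a fixed point of $\alpha$.

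For the converse direction \emph{Kazhdan $\Rightarrow$ fixed point on every $L^1(\Omega)$}: I would combine the Banach--Lamperti description of surjective $L^1$-isometries as weighted composition operators $\pi(g)f=h_g\cdot(f\circ\sigma_g^{-1})$ with the Mazur map $M\colon L^1\to L^2$, $M(f)=\sign(f)|f|^{1/2}$. Setting $\tilde\pi(g)f=|h_g|^{1/2}\sign(h_g)\cdot(f\circ\sigma_g^{-1})$ defines a companion isometric linear representation on $L^2(\Omega)$, and one verifies $M\circ\pi(g)=\tilde\pi(g)\circ M$ together with $\|M(f)\|_2^2=\|f\|_1$. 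Given an affine isometric action $\alpha(g)\xi=\pi(g)\xi+b(g)$ on $L^1$, the equivariance and uniform continuity of $M$ on bounded sets transport boundedness of the $\alpha$-orbit of $0$ to boundedness of $\{M(b(g))\}$ in $L^2$. Property~T on the Hilbert side (applied to a suitable affine problem built from $\tilde\pi$) then forces $b$ itself to be bounded in $L^1$, after which Ryll-Nardzewski averaging inside the weakly compact closed convex hull of $\{b(g)\}\subset L^1$ yields a genuine fixed point of $\alpha$.

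The main obstacle is this second direction. Since $M$ is only uniformly continuous and not linear, a $\pi$-cocycle on $L^1$ does not algebraically push forward to a $\tilde\pi$-cocycle on $L^2$, so property~T on the Hilbert side cannot be invoked off the shelf. The technical heart is therefore to control the nonlinear transport well enough to extract a genuine affine Hilbert-space problem with matching boundedness behaviour --- one natural route is to pass to an ultrapower of $L^1$, which by Lemma~\ref{lem:calL} stays in the class $\mathcal{L}$, and exploit the $\mathcal{L}$-axiom $x\wedge y=0\Rightarrow \|x+y\|=\|x-y\|$ to cleanly separate the sign and magnitude components of the Banach--Lamperti decomposition at the limit, where the linearization becomes honest.
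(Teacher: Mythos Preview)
First, note that the paper does not give its own proof of this theorem: it is quoted verbatim from~\cite{BGM}*{Corollary~D} and used as a black box. So there is no ``paper's proof'' to compare against beyond the original source.

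Your first direction (fixed points on concrete $L$-spaces $\Rightarrow$ Kazhdan) is essentially correct: the Gaussian linear isometric embedding $\iota\colon H\hookrightarrow L^1(H,\gamma)$ lets you promote an affine Hilbert action to an affine $L^1$-action, and a fixed point there bounds the cocycle, hence the original Hilbert orbit.

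The second direction has two genuine gaps. The first is the endgame: you invoke Ryll--Nardzewski on the closed convex hull of $\{b(g)\}$ in $L^1$, but $L^1$ is not reflexive, and a bounded closed convex set in $L^1$ is in general \emph{not} weakly compact, so Ryll--Nardzewski does not apply. This is precisely where the main theorem of~\cite{BGM} enters: they prove that every isometric action with bounded orbit on an \emph{$L$-embedded} Banach space (one that is an $\ell^1$-summand in its bidual) has a fixed point, and $L^1(\Omega)$ is $L$-embedded. No averaging/compactness argument is available here; a genuinely new fixed-point principle is required.

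The second gap is the one you yourself flag, and your proposed ultrapower fix does not close it. Passing to an ultrapower keeps the Mazur map just as nonlinear as before; the axiom $x\wedge y=0\Rightarrow\|x+y\|=\|x-y\|$ tells you nothing about $M(a+b)$ versus $M(a)+M(b)$, so you still cannot manufacture an honest $\tilde\pi$-cocycle on $L^2$ from a $\pi$-cocycle on $L^1$. The route taken in~\cite{BGM} bypasses this entirely: one uses that the kernel $(f,g)\mapsto\|f-g\|_1$ on $L^1$ is conditionally negative definite (via the L\'evy--Khinchin representation $|t|=c\int_{\bbR}(1-\cos ts)\,s^{-2}\,ds$), so the GNS construction produces a Hilbert space $\mathcal{H}$ and a map $\Phi\colon L^1\to\mathcal{H}$ with $\|\Phi(f)-\Phi(g)\|_{\mathcal{H}}^2=\|f-g\|_1$, equivariant for the full (affine) isometry group of $L^1$. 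An unbounded $L^1$-orbit then yields an unbounded orbit for an affine isometric action on $\mathcal{H}$, contradicting property~(T). No linearization of the Mazur map is needed.
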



\begin{theorem}\label{thm:BGM}
    Let $\Gamma$ be a Kazhdan group and $V$ a Banach lattice in the class $\mathcal{L}$ on which $\Gamma$ acts by linear isometries. 
    Then $H^1(\Gamma,V)=0$.
\end{theorem}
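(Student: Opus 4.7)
My plan is to reduce the statement to the concrete L-space setting of Theorem~\ref{thm: L1 Kazhdan} via Kakutani's realization theorem (Theorem~\ref{thm: realization thm}), after first cutting $V$ down to a separable, $\Gamma$-invariant sublattice. Translating into affine fixed points as usual: a $1$-cocycle $c\colon \Gamma \to V$ gives the affine isometric action $g \cdot v \defq gv + c(g)$, and $H^1(\Gamma,V)=0$ amounts to producing, for every such $c$, a vector $v \in V$ with $c(g)=v-gv$.

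The first move is to construct a \emph{separable} closed Banach sublattice $W \subseteq V$ that is $\Gamma$-invariant and contains the image of $c$. I would build $W$ as the norm closure of the countable set obtained from $\{c(g): g\in \Gamma\}$ by iteratively applying the $\Gamma$-action, $\mathbb{Q}$-linear combinations, and the lattice operations $\wedge, \vee$. Each of these operations takes countable sets to countable sets, so $W$ is separable; by construction it is a closed sublattice, it is $\Gamma$-invariant (provided the action commutes with $\wedge,\vee$), and it contains every $c(g)$. The defining axiom of the class $\mathcal{L}$, namely $x\wedge y=0 \Rightarrow \|x+y\|=\|x-y\|$, is manifestly inherited by closed sublattices, so $W \in \mathcal{L}$.

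Now Kakutani applies to $W$: by the first part of Theorem~\ref{thm: realization thm} the separable space $W$ admits a unit, and by the second part it is then isometric and lattice-isomorphic to a concrete L-space $L^1(\Omega)$. The affine isometric action of $\Gamma$ on $V$ restricts to an affine isometric action on $W$, which becomes, via this identification, an affine isometric action on $L^1(\Omega)$. Theorem~\ref{thm: L1 Kazhdan} then supplies a fixed point, yielding the desired $v \in W \subseteq V$ and showing that $c$ is a coboundary, as required.

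The point where the plan needs care is the $\Gamma$-invariance of $W$: the construction gives a $\Gamma$-invariant sublattice only if the $\Gamma$-action commutes with the Banach-lattice operations. This is the natural meaning of an isometric action on a Banach lattice, and for spaces of $L^1$-type it is in any case forced by Lamperti-style rigidity of isometries once Kakutani is in hand. Modulo this point, the argument is a clean combination of Kakutani's realization theorem with the $L^1$-version of the Kazhdan property from~\cite{BGM}, and the main obstacle is not a hard step but rather the conceptual one of identifying the right separable invariant sub-object to which Theorem~\ref{thm: L1 Kazhdan} can be applied.
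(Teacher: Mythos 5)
Your proposal is correct and follows essentially the same route as the paper: reduce to a separable $\Gamma$-invariant sublattice (the paper invokes \cite{BS}*{Lemma 3.6} plus an intermediate sublattice where you build it by hand, which works since Kazhdan groups are countable), realize it as a concrete $L^1$-space via Theorem~\ref{thm: realization thm}, and conclude with Theorem~\ref{thm: L1 Kazhdan} through the standard cocycle/affine-action dictionary. The caveat you raise about the action commuting with $\wedge,\vee$ is unnecessary: alternately closing your countable set under the group action, $\mathbb{Q}$-linear combinations and the lattice operations and then taking the norm closure yields a separable closed sublattice that is $\Gamma$-invariant regardless, because the isometries and the lattice operations are continuous.
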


\begin{proof}
Let $c\in H^1(\Gamma,V)$. By~\cite{BS}*{Lemma 3.6} there exists a $\Gamma$-invariant separable subspace $U\subset V$ such that~$c$ is in the image of the map $H^1(\Gamma,U)\to H^1(\Gamma,V)$ induced by the inclusion. 
    There is a $\Gamma$-invariant separable intermediate subspace $U\subset V_0\subset V$ that is a sublattice of $V$. Thus $V_0\in\mathcal{L}$. Let $c_0\in H^1(\Gamma,V_0)$ be a preimage of~$c$. 
    Using Theorem~\ref{thm: realization thm}, we regard $V_0$ as a concrete L-space. By Theorem~\ref{thm: L1 Kazhdan} and the interpretation of the fixed point property as  vanishing $1$-cohomology~\cite{bekka+valette}*{Lemma~2.2.6 on p.~77} we obtain that $c_0=0$. Hence $c=0$.
\end{proof}

\subsection{Coboundary expansion}

Let $\Gamma$ be a countable group and $V$ a Banach space on which $\Gamma$ acts by linear isometries. 
The cohomology groups  $H^i(\Gamma,V)$ are naturally endowed with a topological vector space structure~\cite{guichardet}*{Chapitre~III~\S 1} which comes from the Fréchet structure on the standard bar resolution. 
We can read off the topological structure from any countable CW-model~$X$ of the classifying space $B\Gamma$ as follows. 
Let $\tilde X$ denote the  universal cover of~$X$.
We consider the cellular cochain complex
\begin{align}\label{eq: cellular cochain complex}
\hom_{\bbZ\Gamma}\bigl( C^\cell_0(\tilde X), V\bigr)\xrightarrow{d^0_V} \hom_{\bbZ\Gamma}\bigl( C^\cell_1(\tilde X), V\bigr)\xrightarrow{d^1_V}\hom_{\bbZ\Gamma}\bigl( C^\cell_2(\tilde X), V\bigr) \to \cdots
\end{align}
whose cohomology is isomorphic to $H^*(\Gamma,V)$. 
For each~$i\in\bbN$ let $k_i\in\bbN\cup\{\infty\}$ be the number of $i$-cells in $X$.
By picking a representative in  each $\Gamma$-orbit of $i$-cells
we obtain an isomorphism 
\begin{align}\label{eq: ident}
\hom_{\bbZ\Gamma}\bigl( C^\cell_i(\tilde X), V\bigr) \cong \hom_{\bbZ\Gamma}\bigl( \oplus^{k_i}\bbZ\Gamma, V\bigr)\cong V^{k_i}
\end{align}
with the Fréchet space $V^ {k_i}$. The resulting Fréchet structure on every cochain group is independent of the choice of representatives. 
Under~\eqref{eq: ident} the differentials $d^i_V$ correspond to  $(k_{i+1}\times k_{i})$-matrices over $\mathbb{Z}\Gamma$ that we will denote by~$\delta^i$.
Note that the matrices $\delta^i$ are independent of $V$.
The dimension of~$\delta^i$ might be infinite, but every row has only a finite number of non-zero elements. So the multiplication with~$\delta^i$ is unambiguous.
It follows that each~$d^i_V$ is  continuous. Thus~\eqref{eq: cellular cochain complex} is a cochain complex of Fréchet spaces. 
Its cohomology groups inherit a subquotient topological vector space structure which coincides with the topological structure from the standard bar resolution. 

\begin{convention} \label{con: banach}
If $k_i<\infty$ we equip the space $V^{k_i}$ with the 
$\ell^1$-direct sum norm
\[ \|\bar{v}\|=\sum_{j=1}^{k_i}\|v_j\|,~~\bar{v}=(v_j)_{j=1}^{k_i}\in V^{k_i} \] 
and regard $\hom_{\bbZ\Gamma}\bigl( C^\cell_i(\tilde X), V\bigr)$ accordingly as a Banach space, using \eqref{eq: ident}.
The resulting norm on 
$\hom_{\bbZ\Gamma}\bigl( C^\cell_i(\tilde X), V\bigr)$ does not depend on the choice in the isomorphism~\eqref{eq: ident}. 
\end{convention}

In this section we will consider a \emph{finitely presented} Kazhdan group.
In this case the CW-complex $X$ could be chosen to have a finite $2$-skeleton,
so the differentials $d^0_V$ and $d^1_V$ become bounded maps between Banach spaces.

\begin{theorem}\label{thm:BGM2}
    Let $\Gamma$ be a finitely presented Kazhdan group and $V$ a Banach lattice in the class $\mathcal{L}$ on which $\Gamma$ acts by linear isometries. 
    Then $H^2(\Gamma,V)$ is Hausdorff as a topological vector space.
\end{theorem}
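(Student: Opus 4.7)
The topology on $H^2(\Gamma,V)$ is the quotient topology on $\ker(d^2_V)/\im(d^1_V)$ arising from the Banach space structure on the cellular cochain groups (Convention~\ref{con: banach}), which is available because finite presentation of $\Gamma$ lets us take $k_0,k_1,k_2<\infty$. This quotient is Hausdorff if and only if $\im(d^1_V)$ is closed in $\ker(d^2_V)$, and hence (since $\ker(d^2_V)$ is already closed) in $\hom_{\bbZ\Gamma}(C^\cell_2(\tilde X),V)$. A standard Banach space argument shows that $\im(d^1_V)$ is closed if and only if there is a constant $C>0$ with
\[ d\bigl(a,\ker(d^1_V)\bigr)\le C\,\|d^1_V(a)\| \quad \text{for every } a\in \hom_{\bbZ\Gamma}\bigl(C^\cell_1(\tilde X),V\bigr). \]

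The plan is to argue by contradiction using a Banach lattice ultraproduct together with Theorem~\ref{thm:BGM}. If no such $C$ existed, then for each $n$ there would be $a_n$ with $d(a_n,\ker(d^1_V))=1$ and $\|d^1_V(a_n)\|<1/n$; after subtracting a near-optimal element of $\ker(d^1_V)$, which does not affect $d^1_V(a_n)$, we may additionally assume $\|a_n\|\le 2$. Fix a non-principal ultrafilter $\omega$ on $\bbN$ and form the Banach lattice ultraproduct $V_\omega$. By Lemma~\ref{lem:calL} we have $V_\omega\in\mathcal{L}$, and the isometric $\Gamma$-action on $V$ passes diagonally to an isometric action on $V_\omega$.

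Because $k_1<\infty$, the bounded sequence $(a_n)$ defines an element $a_\omega\in \hom_{\bbZ\Gamma}(C^\cell_1(\tilde X),V_\omega)$. Since the cellular differential in either coefficient module is given by the same matrix $\delta^1$ over $\bbZ\Gamma$, the image $d^1_{V_\omega}(a_\omega)$ is represented by the null sequence $(d^1_V(a_n))$ and thus vanishes; that is, $a_\omega$ is a $1$-cocycle with coefficients in $V_\omega$. Applying Theorem~\ref{thm:BGM} to $V_\omega$ yields $H^1(\Gamma,V_\omega)=0$, so $a_\omega=d^0_{V_\omega}(u_\omega)$ for some $u_\omega\in \hom_{\bbZ\Gamma}(C^\cell_0(\tilde X),V_\omega)\cong V_\omega^{k_0}$ represented by a bounded sequence $(u_n)$ in $V^{k_0}$. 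Unwinding the ultraproduct definition gives $\lim_\omega \|a_n-d^0_V(u_n)\|=0$; since $d^0_V(u_n)\in\ker(d^1_V)$, this contradicts $d(a_n,\ker(d^1_V))=1$.

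The main obstacle is that concrete $L^1$-spaces are not stable under Banach space ultraproducts, so Theorem~\ref{thm: L1 Kazhdan} cannot be invoked on $V_\omega$ directly. The abstract L-space formalism together with Lemma~\ref{lem:calL} and Kakutani's theorem bridges precisely this gap: $V_\omega$ still belongs to $\mathcal{L}$ and therefore, upon passing to a suitable separable sublattice with a unit, is isometrically a concrete L-space, so Theorem~\ref{thm:BGM} applies to $V_\omega$ and supplies the vanishing needed to close the argument.
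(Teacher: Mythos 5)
Your proposal is correct and follows essentially the same route as the paper: reduce Hausdorffness to closedness of $\im(d^1_V)$ in the Banach cochain groups (available by finite presentation), negate via the open-mapping-theorem estimate to get a bounded sequence of almost-cocycles far from $\ker(d^1_V)$, pass to the ultrapower $V_\omega\in\mathcal{L}$ using Lemma~\ref{lem:calL}, and invoke Theorem~\ref{thm:BGM} to produce a coboundary that contradicts the distance bound. The only cosmetic difference is that you make the quantitative characterization of closed range explicit, which the paper leaves implicit.
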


The proof uses an ultraproduct argument introduced in \cite[Theorem 3.11]{BS}.

\begin{proof}
Let us assume that $H^2(\Gamma,V)$ is not Hausdorff.
Let $X$ be a CW-model of $B\Gamma$ with a finite $2$-skeleton as above.
Via~\eqref{eq: ident} the chain complex \eqref{eq: cellular cochain complex} becomes 
\[ V^{k_0} \xrightarrow{\delta^0} V^{k_1} \xrightarrow{\delta^1} V^{k_2} \rightarrow\cdots \]
We consider each space in this chain complex as an $\ell^1$-direct sum of copies of $V$ and endow it with the corresponding norm.
The maps $\delta^0$ and $\delta^1$ are bounded maps between Banach spaces.
From our assumption we deduce that the image of $\delta^1$ is not closed.
It follows from (the easy direction of) the closed range theorem that the induced injection $V^{k_1}/\ker \delta^1\to V^{k_2}$ is not bounded from below, where the range is taken with the quotient Banach norm.
Therefore, we can find a sequence of unit vectors in the domain whose images converge to~$0$ in the range.
By taking a bounded (by, say, 2) sequence of preimages in $V^{k_1}$ we thus find a bounded sequence of elements $\bar{v}_n\in V^{k_1}$ such that $\lim_{n\to \infty} \delta^1(\bar{v}_n)=0$ in $V^{k_2}$ and for every cocycle $\bar{v}\in V^{k_1}$, $\|\bar{v}_n-\bar{v}\|\geq 1$ and in particular, for every $\bar{u}\in V^{k_0}$, $\|\bar{v}_n-\delta^0(\bar{u})\|\geq 1$.

We fix a non-principal ultrafilter $\omega$ and let $V_\omega$ be the corresponding ultrapower.
For $i\leq 2$, using the finiteness of $k_i$, we naturally identify the ultrapower of $V^{k_i}$ with $(V_\omega)^{k_i}$ and use the (now unambiguous) notation $V_\omega^{k_i}$.
We consider the element in $V_\omega^{k_1}$ which is represented by the sequence $(\bar{v}_n)$. It is a cocycle, that is, $\delta^1((\bar{v}_n))=0$ in $V_\omega^{k_2}$.
By Lemma~\ref{lem:calL}, $V_\omega$ is in the class $\mathcal{L}$, and we get by Theorem~\ref{thm:BGM} that $H^1(\Gamma,V_\omega)=0$.
It follows that $(\bar{v}_n)$ is a coboundary, that is there exists a sequence $\bar{u}_n\in V^{k_0}$ such that the element in $V_\omega^{k_0}$ represented by the sequence $(\bar{u}_n)$ satisfies $(\bar{v}_n)=\delta^0((\bar{u}_n))$.
This is a contradiction, as $\|\bar{v}_n-\delta^0(\bar u_n)\|\geq 1$ in $V^{k_1}$ for every $n$ yields $\|(\bar{v}_n)-\delta^0((\bar u_n))\|\geq 1$ in $V_\omega^{k_1}$. 
Hence $H^2(\Gamma,V)$ is Hausdorff.
\end{proof}

The next theorem associates with a choice of a CW-complex a certain constant of expansion in much the same way as the classical Kazhdan constant is associated with a choice of a finite generating set.

\begin{theorem}[Kazhdan constants]\label{thm: coboundary expansion for L-spaces}
Let $\Gamma$ be a finitely presented Kazhdan group and let $X$
be a CW-model of the classifying space $B\Gamma$ with a finite $2$-skeleton. 

There is a constant~$C>0$ with the following property. For every Banach lattice $V\in\mathcal{L}$ on which~$\Gamma$ acts by linear isometries, 
for each $i\in\{0,1\}$ and for every \[c\in \im \Bigl( \hom_{\bbZ\Gamma}\bigl( C^\cell_i(\tilde X), V\bigr)\xrightarrow{d^i_V}\hom_{\bbZ\Gamma}\bigl( C^\cell_{i+1}(\tilde X), V\bigr)\Bigr)\] 
there is an $i$-cochain~$b$ such that $d^i_V(b)=c$ and $\|b\|\le C\cdot\|c\|$. Here the norms are those introduced in~\ref{con: banach}. 
\end{theorem}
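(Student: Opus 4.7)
The plan is to prove the uniform fill bound separately for $i = 0$ and $i = 1$, obtaining constants $C_0$ and $C_1$, and then take $C := \max\{C_0, C_1\}$. Both cases proceed by contradiction, passing to an ultraproduct $V_\omega$ of a sequence of increasingly bad examples $V_n$ along a non-principal ultrafilter $\omega$. By Lemma~\ref{lem:calL}, $V_\omega$ again lies in $\mathcal{L}$ with an isometric $\Gamma$-action, so Theorems~\ref{thm:BGM} and~\ref{thm:BGM2} apply to it. For each individual $V \in \mathcal{L}$, these same theorems make both $\im d^0_V = \ker d^1_V$ and $\im d^1_V$ closed, so the open mapping theorem already yields a finite but a priori $V$-dependent fill constant; the content of the theorem is the uniformity of this constant over $V$.

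For $i = 1$ I would use a normalization trick. Supposing uniform fill fails, choose $c_n \in \im d^1_{V_n}$ with $\|c_n\| = 1$ and minimum preimage norm $R_n \ge n$, and pick a preimage $a_n^*$ with $\|a_n^*\| \le R_n + 1$. The normalized vector $\tilde a_n := a_n^*/\|a_n^*\|$ has unit norm and satisfies $\|d^1_{V_n}(\tilde a_n)\| = 1/\|a_n^*\| \le 1/n$, so the class $[\tilde a_n] \in V_\omega^{k_1}$ is a cocycle of norm~$1$. Theorem~\ref{thm:BGM} applied to $V_\omega$ together with the open mapping theorem for $d^0_{V_\omega}$ produces $\bar u = [u_n] \in V_\omega^{k_0}$ with $d^0_{V_\omega}(\bar u) = [\tilde a_n]$ and $\|\bar u\| \le K_\omega < \infty$. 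For $\omega$-many $n$ we then have $\|u_n\| \le K_\omega + 1$ and $\|\tilde a_n - d^0(u_n)\| \le 1/2$, so the rescaled element $b_n := a_n^* - d^0(\|a_n^*\| u_n) = \|a_n^*\| \bigl(\tilde a_n - d^0(u_n)\bigr)$ satisfies $d^1_{V_n}(b_n) = c_n$ (by $d^1 d^0 = 0$) with $\|b_n\| \le (R_n+1)/2 < R_n$ for $R_n > 1$, contradicting the minimality of $R_n$.

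For $i = 0$ the analogous trick has no target, since there is no $d^{-1}$ to absorb the normalized vector. Instead I would first establish the weaker uniform \emph{approximate} fill property: some $M$ such that every $c \in \im d^0_V$ of unit norm admits $u$ with $\|u\| \le M$ and $\|c - d^0(u)\| \le 1/2$. Its negation gives $V_n, c_n$ with $\|c_n\| = 1$ for which no $u$ of norm $\le n$ is within $1/2$ of $c_n$; but $[c_n] \in V_\omega^{k_1}$ is a cocycle, so by Theorem~\ref{thm:BGM} and the open mapping theorem it is a coboundary with fill bounded by some $K_\omega$ in $V_\omega$, and lifting for $\omega$-cofinal $n > K_\omega + 1$ contradicts the assumption. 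A standard geometric-series iteration ($c_0 := c$; $u_k$ approximately filling $c_k$ with $\|u_k\| \le M\|c_k\|$ and $\|c_{k+1}\| := \|c_k - d^0(u_k)\| \le \|c_k\|/2$; $u := \sum_k u_k$) then upgrades uniform approximate fill to uniform exact fill with $C_0 := 2M$. The chief delicacy I anticipate is the $i = 1$ normalization step, where one must verify that $[\tilde a_n]$ really is a cocycle in $V_\omega^{k_1}$ and that the rescaled preimage $b_n$ has strictly smaller norm than $R_n$; both follow from the choice $\|a_n^*\| \le R_n + 1$ together with $R_n \ge n \ge 2$, so the contradiction closes cleanly.
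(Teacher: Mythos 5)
Your argument is correct, but it follows a genuinely different route than the paper's. The paper also argues by contradiction and rescales so that the optimal fill norm of $c_n$ equals $1$ (hence $\|c_n\|\le 2/n$), but instead of passing to an ultraproduct it places all the bad examples into the single space $U=\bigl(\bigoplus_n V_n\bigr)_{\ell^1}$, which again lies in $\mathcal{L}$; Theorem~\ref{thm:BGM} (for $i=0$) and Theorem~\ref{thm:BGM2} (for $i=1$) then make $H^{i+1}(\Gamma,U)$ Hausdorff, so $\im d^i_U$ is closed, and the open mapping theorem yields one fill constant for $U$, which each summand $V_n$ inherits because the coordinate projections $U\to V_n$ are norm non-increasing and commute with the differentials --- contradicting that $c_n$ has norm tending to $0$ but fill norm at least $1$. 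This disposes of $i=0$ and $i=1$ in a single stroke. You work instead in the ultraproduct $V_\omega$, where no projection back onto $V_n$ exists, and you compensate with the renormalization $\tilde a_n=a_n^*/\|a_n^*\|$ in degree $1$ and the approximate-fill-plus-geometric-series iteration in degree $0$; both devices are sound, and the delicate points (choosing $a_n^*$ with $\|a_n^*\|\le R_n+1$, intersecting finitely many $\omega$-large sets, requiring $R_n>1$) are handled correctly. What your route buys: the degree-$1$ case never invokes Theorem~\ref{thm:BGM2} --- your sequence $\tilde a_n$ is exactly the kind of witness appearing in its proof, so you in effect give a quantitative reproof of it, since a uniform fill constant for $d^1_V$ over all $V\in\mathcal{L}$ forces $\im d^1_V$ to be closed for each such $V$. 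What the paper's route buys: a shorter, uniform argument with no case split, no renormalization and no iteration, at the cost of quoting the Hausdorffness of $H^2$, whose own proof is the same ultraproduct mechanism you deploy inline.
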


\begin{proof}
Fix $i\in\{0,1\}$. We assume by contradiction that 
there is a sequence $V_n$ of Banach lattices in the class $\mathcal{L}$ with linear isometric $\Gamma$-actions and non-zero cochains $c_n\in d^i_{V_n}\bigl(\hom_{\bbZ\Gamma}\bigl( C^\cell_{i}(\tilde X), V_n\bigr)\bigr)$ such that for each $b\in \hom_{\bbZ\Gamma}\bigl( C^\cell_{i}(\tilde X), V_n\bigr)$ with $d^i_{V_n}(b)=c_n$ we have $\norm{b}\ge n\norm{c_n}$. 
Upon rescaling, we assume that for every $n$, $\inf \{\|b\| \mid d^i_{V_n}(b)=c_n\}=1$.
In particular, $0<\|c_n\|\leq 1/n$.

For every $n\in\bbN$, we let $b_n\in \hom_{\bbZ\Gamma}\bigl( C^\cell_{i}(\tilde X), V_n\bigr)$ be a vector that satisfies $d^i_{V_n}(b_n)=c_n$, $1\leq \|b_n\|\leq 2$.
We let $U$ be the $\ell^1$-direct sum of the spaces $V_n$ and consider each vector $b_n$ as a vector in $U^{k_i}$, where $k_i$ is the number of $i$-cells in~$X$.
The space $U^{k_i}$ is in the class $\mathcal{L}$.
We conclude that $H^{i+1}(\Gamma,U)$ is Hausdorff, using Theorem~\ref{thm:BGM} for $i=0$ and Theorem~\ref{thm:BGM2} for $i=1$. Thus the image of $d^i_U$ is closed.
This contradicts the open mapping theorem, as witnessed by the sequence $b_n\in U^{k_i}$.
\end{proof}

Given a CW-complex $X$ with a finite $i$-skeleton, we identify the cellular cochain space $C_\cell^i(X,\bbR)$ with $\mathbb{R}^{k_i}$, where $k_i$ is the number of $i$-cells, and endow it with the corresponding $\ell^1$-norm.

\begin{theorem}[Coboundary expansion]\label{thm: real isoperimetric in degree 1}
Let $X$ be a connected CW-complex with Kazhdan fundamental group and a finite 2-skeleton. There is a constant~$C>0$ such that for every finite cover $\bar X\to X$, for every $i\in\{0,1\}$ and every cellular coboundary $c\in \im d^i\subset C_\cell^{i+1}(\bar X,\bbR)=\hom_\bbZ\bigl(C_{i+1}^\cell(\bar X),\bbR\bigr)$ there is $b\in C_\cell^i(\bar X,\bbR)$ such that $d^i(b)=c$ and
$\norm{b}\le C\cdot \norm{c}$. 
\end{theorem}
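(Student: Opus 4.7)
The plan is to deduce this theorem from Theorem~\ref{thm: coboundary expansion for L-spaces} through two reductions: first, from the given $X$ to a model of $B\Gamma$ with the same $2$-skeleton; second, from individual finite covers to $\Gamma$-equivariant cochains on the universal cover with coefficients in an induced representation.

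Setting $\Gamma=\pi_1(X)$, I would first attach cells of dimension~$\ge 3$ to $X$ to kill all higher homotopy groups and obtain a CW-model $Y$ of $B\Gamma$ with the same (finite) $2$-skeleton as $X$. Since the cellular cochain groups $C^i_\cell$ for $i\le 2$ and the differentials $d^0,d^1$ depend only on the $2$-skeleton, the finite cover $\bar Y\to Y$ attached to any finite-index subgroup $\bar\Gamma\le\Gamma$ has the same $2$-skeleton as the corresponding cover $\bar X\to X$, and hence the same cochain groups in degrees $\le 2$ and the same differentials $d^0,d^1$. Thus the statement for $\bar X$ is equivalent to the statement for $\bar Y$.

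Next, to each finite cover $\bar X\to X$ I would associate the Banach lattice $V=\ell^1(\Gamma/\bar\Gamma)$ with $\Gamma$ acting by left translation. This is a concrete L-space (integrable functions on the finite set $\Gamma/\bar\Gamma$ with counting measure), so it lies in class~$\mathcal{L}$, and $\Gamma$ acts by linear isometries. The Shapiro identification on the cochain level provides an isomorphism
\[
\hom_{\bbZ\Gamma}\bigl(C^\cell_i(\tilde Y), V\bigr) \;\cong\; \hom_{\bbZ\bar\Gamma}\bigl(C^\cell_i(\tilde Y), \bbR\bigr) \;=\; C^i_\cell(\bar Y, \bbR),
\]
compatible with the differentials. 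Because every $\Gamma$-orbit of $i$-cells in $\tilde Y$ yields exactly $[\Gamma:\bar\Gamma]$ cells in $\bar Y$, the $\ell^1$-direct sum norm on $V^{k_i}$ prescribed by Convention~\ref{con: banach} matches precisely the $\ell^1$-norm on $\bbR^{\bar k_i}$ used in the theorem statement.

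Finally, Theorem~\ref{thm: coboundary expansion for L-spaces} applied to $Y$ yields a single constant $C>0$, depending only on $Y$ (and hence only on $X$), valid for every $V\in\mathcal{L}$ simultaneously. Specializing to $V=\ell^1(\Gamma/\bar\Gamma)$ for each finite cover $\bar X\to X$ then produces the desired uniform bound. The only real obstacle is the careful verification that the Shapiro identification is isometric for the specified norms, a bookkeeping issue rather than a genuine difficulty; all serious functional-analytic input has already been absorbed into Theorem~\ref{thm: coboundary expansion for L-spaces}.
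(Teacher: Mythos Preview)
Your proposal is correct and follows essentially the same approach as the paper: reduce to a model of $B\Gamma$ sharing the $2$-skeleton of $X$, identify the cochain complex of a finite cover $\bar X$ with the $\Gamma$-equivariant cochain complex of $\tilde X$ with coefficients in the concrete L-space $\ell^1(\Gamma/\bar\Gamma)$, verify that this identification is isometric for the $\ell^1$-norms, and then apply Theorem~\ref{thm: coboundary expansion for L-spaces}. The paper is terser about the Shapiro-type identification and the passage to $B\Gamma$, but the content is the same.
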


\begin{proof}
Let $\Gamma=\pi_1(X)$. The $2$-skeleton of $X$ is the $2$-skeleton of a model of the classifying space~$B\Gamma$, and we can apply Theorem~\ref{thm: coboundary expansion for L-spaces}. By covering theory, a finite cover $\bar X$ of $X$ corresponds to a subgroup $\Lambda<\Gamma$ of finite index. 
We equip $\ell^1(\Gamma/\Lambda)=\bbR[\Gamma/\Lambda]$ with the $\ell^1$-norm and for $i\leq 2$ we regard the cochain spaces $\hom_{\bbZ\Gamma}\bigl( C_i^\cell(\tilde X), \ell^1(\Gamma/\Lambda)\bigr)$ as Banach spaces using Convention~\ref{con: banach}. 
For the induced CW-structure on $\bar X$, the cellular cochain complex $C^\ast(\bar X,\bbR)$ with its $\ell^1$-norm is isometrically isomorphic to $\hom_{\bbZ\Gamma}\bigl( C_\ast^\cell(\tilde X), \ell^1(\Gamma/\Lambda)\bigr)$. Therefore the statement follows from Theorem~\ref{thm: coboundary expansion for L-spaces}. 
\end{proof}

\section{Expansion and Linear Programming}
\label{sec: total unimodularity}

The goal of this section is to prove an integral version of Theorem~\ref{thm: real isoperimetric in degree 1} (Theorem~\ref{thm:ZT}), which is about an optimization problem. To this end, we use some ideas from \emph{(integral) linear programming}. 
Linear Programming is concerned with maximizing a linear functional on a given convex polytope. Integral Linear Programming is concerned with finding a maximizer with integral coordinates.

Some notations are in order. An \emph{absolute value} on a nonzero commutative ring~$R$ is a function
$|\cdot|\colon R\to [0,\infty)$ satisfying $|0|=0$ and $|1|=1$ such that $|xy|=|x|\cdot |y|$ and $|x+y|\leq |x|+|y|$ for all $x,y\in R$.  
Given an absolute value on~$R$, we define for elements of a  finitely generated free module $R^n$:  
\[ \|(x_1,\ldots,x_n)\|=\sum_{i=1}^n |x_i|. \]

\begin{remark}
In the context of this paper we only care for the standard absolute value on $\mathbb{R}$ and its restrictions to $\mathbb{Q}$ and $\mathbb{Z}$.
However, a broader setup is useful for dealing e.g with the Hamming norm on $\mathbb{F}_2^n$ - compare the definition of expansion below to \cite[Definition 2]{overlap}.
Note that every nonzero commutative ring has an absolute value, namely the characteristic function of the complement of a prime ideal. 
\end{remark}

\begin{defn}
   For an $R$-linear transformation $A\colon R^n \to R^m$ we define its \emph{expansion constant} at $v\in A(R^n)$ as
   \[ \Xi(A,v) = \inf \bigl\{\alpha \geq 0 \mid \exists_{ u\in A^{-1}(\{v\})}~\|u\|\leq \alpha \cdot \|v\|\bigr\}. \]
   We define the \emph{expansion constant of $A$} by
   \[ \Xi(A)=\sup \bigl\{ \Xi(A,v) \mid v\in A(R^n) \bigr\}. \]
   Emphasizing the role of the ring $R$, we may write $\Xi_R(A)=\Xi(A)$. 
\end{defn}

Given a commutative $R$-algebra $S$ and viewing $A$ as a transformation $S^n\to S^m$, we might obtain a different expansion constant $\Xi_S(A)$.

\begin{lemma} \label{lem:ER<EZ}
        Let $A$ be an integer matrix. 
    Then $\Xi_{\bbR}(A) \leq \Xi_{\bbZ}(A)$.
    If the linear map $A$ is injective, then $\Xi_{\bbR}(A) = \Xi_{\bbZ}(A)$.
\end{lemma}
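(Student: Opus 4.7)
My plan is to write each expansion constant as the ratio $\Xi_R(A,v) = m_R(v)/\|v\|$ (for $v\neq 0$), where
\[ m_R(v) \defq \min\{\|u\| : u\in R^n,\ Au=v\},\qquad R\in\{\mathbb{R},\mathbb{Z}\}, \]
and then exploit homogeneity together with density of $A(\mathbb{Q}^n)$ in $A(\mathbb{R}^n)$. Both minima are attained because $\|\cdot\|$ is proper on the (closed, resp.\ discrete and nonempty) preimage set; one obviously has $m_\mathbb{R}\le m_\mathbb{Z}$ wherever both are defined. I also expect $m_\mathbb{R}$ to be a genuine norm on the finite-dimensional subspace $A(\mathbb{R}^n)$ (positively homogeneous, subadditive, strictly positive off~$0$), and hence continuous.

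For the inequality $\Xi_\mathbb{R}(A)\le\Xi_\mathbb{Z}(A)$ I would first treat rational $v\in A(\mathbb{Q}^n)$: choosing a rational preimage and clearing denominators produces $N\in\mathbb{N}$ with $Nv\in A(\mathbb{Z}^n)$, so that
\[ \Xi_\mathbb{R}(A,v)=\frac{m_\mathbb{R}(Nv)}{\|Nv\|}\le\frac{m_\mathbb{Z}(Nv)}{\|Nv\|}=\Xi_\mathbb{Z}(A,Nv)\le\Xi_\mathbb{Z}(A). \]
Since $A$ has integer entries, $A(\mathbb{Q}^n)$ is dense in $A(\mathbb{R}^n)$, so continuity of $\Xi_\mathbb{R}(A,\cdot)$ away from~$0$ would promote the bound to every $v\in A(\mathbb{R}^n)\setminus\{0\}$, and taking the supremum concludes this half. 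The case $\Xi_\mathbb{Z}(A)=\infty$ is vacuous, so I never need to worry about finiteness.

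For the injective case I would argue that when $A$ is one-to-one each $v\in A(\mathbb{R}^n)$ has a unique preimage, and for $v\in A(\mathbb{Z}^n)$ this unique real preimage coincides with the unique integer preimage; thus $m_\mathbb{R}(v)=m_\mathbb{Z}(v)$ on $A(\mathbb{Z}^n)$, and $\Xi_\mathbb{Z}(A)\le\Xi_\mathbb{R}(A)$ follows simply because the supremum defining $\Xi_\mathbb{Z}(A)$ is taken over a subset of that defining $\Xi_\mathbb{R}(A)$. The only delicate point I foresee is verifying that $m_\mathbb{R}$ is genuinely a norm rather than a mere seminorm, so that continuity is really at one's disposal for the passage from rational to real~$v$; this should be routine since $m_\mathbb{R}(v)=0$ forces a preimage of norm zero, hence $v=0$.
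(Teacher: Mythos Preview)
Your proposal is correct and follows essentially the same route as the paper: scale a rational image vector into $A(\bbZ^n)$, compare the real and integer optima there, and pass to arbitrary real $v$ by a continuity/density argument; in the injective case, uniqueness of preimages gives equality on $A(\bbZ^n)$. The paper phrases this via an intermediate $\Xi_\bbQ$ and the one-line remark ``$\Xi_\bbR(A)=\Xi_\bbQ(A)$ follows from continuity'', whereas you bypass $\Xi_\bbQ$ and instead justify the continuity step by observing that $m_\bbR$ is a genuine norm on the finite-dimensional space $A(\bbR^n)$---a detail the paper leaves implicit.
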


\begin{proof}
For every $u\in A(\mathbb{Z}^n)$ we clearly have $\Xi_\bbQ(A,u) \leq \Xi_\bbZ(A,u)$ with equality if $A$ is injective.
For every $v\in A(\mathbb{Q}^n)$ and $\beta \in \mathbb{Q}^\times$, we have $\Xi_\bbQ(A,v)=\Xi_\bbQ(A,\beta v)$ and we can pick $\beta$ such that $\beta v\in A(\mathbb{Z}^n)$. We thus obtain
   \[ \Xi_\bbQ(A)=\sup \{ \Xi_\bbQ(A,v) \mid v\in A(\mathbb{Z}^n) \}
   \leq \sup \{ \Xi_\bbZ(A,v) \mid v\in A(\mathbb{Z}^n) \}= \Xi_\bbZ(A) \]
   with equation for $A$ injective.
   Now $\Xi_\bbR(A)=\Xi_\bbQ(A)$ follows from continuity.
\end{proof}

The inequality in Lemma~\ref{lem:ER<EZ} might be strict if $A$ is not injective.

\begin{example}
    For the $(1\times 2)$-matrix $A=(1,2)$, $\Xi_\bbR(A)=1/2$ while $\Xi_\bbZ(A)=1$.
\end{example}

The rest of section is devoted to finding further sufficient conditions for the equation $\Xi_\bbZ(A)=\Xi_\bbR(A)$.
To this end, the following definition is important.

\begin{defn}
The matrix $A$ is said to be \emph{totally unimodular} if all of its minors are either $-1$, $0$ or $1$.
\end{defn}

In particular, every entry of a totally unimodular matrix is either $-1$, $0$ or $1$. 
Next we state a fundamental theorem in integral linear Programming by Hoffman-Kruskal.  
We use the notation $\bar{\mathbb{Z}}=\mathbb{Z}\cup\{\pm\infty\}$. 
The relation $\leq$ is understood coordinate-wise with the usual convention regarding $\pm\infty$.

\begin{theorem}[Hoffman-Kruskal, {\cite{Hoffman-Kruskal}*{Theorem 2}}] \label{thm:TU}
Let $A$ be an integer $m\times n$ matrix.
Then $A$ is totally unimodular if and only if for all $b,b' \in \bar{\mathbb{Z}}^n$ and $c,c' \in \bar{\mathbb{Z}}^m$, every face of the closed convex set 
\[ \bigl\{u\in \mathbb{R}^n \mid~b\leq u \leq b',~c\leq Au\leq c'\bigr\} \]
intersects $\mathbb{Z}^n$ non-trivially.
\end{theorem}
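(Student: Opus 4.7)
The plan is to prove both directions of this classical result from integral linear programming. I would first rewrite the polyhedron uniformly as $P=\{u\in\mathbb{R}^n\mid Mu\le d\}$, where $M$ is built by stacking the blocks $I, -I, A, -A$ (dropping rows for which the associated bound is $\pm\infty$) and $d$ packages the corresponding entries of $b', -b, c', -c$. A preliminary lemma asserts that if $A$ is totally unimodular then so is $M$: any square submatrix of $M$ that contains a row from one of the identity blocks can be reduced by Laplace expansion along that row to a strictly smaller submatrix up to sign, and iterating reduces its determinant to that of a submatrix of $\pm A$.

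For the forward direction, let $F$ be a non-empty face of $P$ and $F_{0}\subseteq F$ a minimal face, described as $\{u\mid M'u=d'\}$ for $M'$ the submatrix of rows of $M$ active on $F_{0}$ and $d'$ the corresponding entries of $d$. Since $M'$ inherits total unimodularity from $M$, I would invoke the Smith normal form $M'=UDV$ with $U,V$ unimodular integer matrices; the invariant factors along the diagonal of $D$ divide the corresponding minors of $M'$ and are therefore each $\pm 1$, so the integer cokernel of $M'$ is torsion-free. Because $F_{0}$ is non-empty, $d'$ lies in the rational image of $M'$, and the torsion-free cokernel then promotes this to membership in the integer image, producing a lattice point in $F_{0}\subseteq F$.

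For the converse, take any square non-singular submatrix $B$ of $A$, indexed by a set $I$ of rows and $J$ of columns, and fix $i_{0}\in I$ and $j_{0}\in J$. I would freeze $u_j=0$ for $j\notin J$ via $b_j=b'_j=0$, impose $B u_J=e_{i_{0}}$ via $c_i=c'_i=\delta_{i,i_{0}}$ for $i\in I$, and leave the remaining $c_i, c'_i$ at $\pm\infty$. The resulting polyhedron is the singleton $\{u\}$ with $u_J=B^{-1}e_{i_{0}}$ and all other coordinates zero, which is a face of itself; by hypothesis it must be integer, so $(B^{-1})_{j_{0},i_{0}}\in\mathbb{Z}$. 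Varying $i_{0}$ and $j_{0}$ shows that $B^{-1}$ is integral, and $\det(B)\det(B^{-1})=1$ with both factors integers forces $|\det(B)|=1$; applying this to every square submatrix of $A$ of every size yields total unimodularity.

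The chief obstacle I anticipate is the non-pointed case in the forward direction: when $P$ contains affine lines, a minimal face is a positive-dimensional affine subspace rather than a vertex, and the one-line Cramer's-rule argument that a vertex has integer coordinates breaks down. The Smith-normal-form step above is exactly what bridges this gap, relying on the interpretation of invariant factors as gcds of minors. A secondary bookkeeping point is the handling of $\pm\infty$ bounds, which becomes painless once the corresponding rows are simply omitted from $M$ and $d$.
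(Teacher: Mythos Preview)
The paper does not supply its own proof of this theorem; it is quoted as a classical result and attributed directly to Hoffman--Kruskal. So there is no in-paper argument to compare against, only the original reference.

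That said, your proposal is a correct and self-contained proof. The converse direction is exactly the standard one: pinning all non-$J$ coordinates to $0$ and all $I$-coordinates of $Au$ to a unit vector produces a singleton polyhedron whose sole point has $J$-block $B^{-1}e_{i_0}$, and integrality of $B^{-1}$ follows. (You could drop the separate quantification over $j_0$, since integrality of the whole column $B^{-1}e_{i_0}$ comes at once.)

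Your forward direction is slightly different from the textbook treatment and, in one respect, cleaner. The usual argument reduces to extreme points and invokes Cramer's rule on an $n\times n$ invertible totally unimodular subsystem, which only works directly when the polyhedron is pointed; the non-pointed case is then handled by an auxiliary reduction (e.g.\ projecting out the lineality space or adjoining box constraints). Your Smith-normal-form step sidesteps this bifurcation: total unimodularity forces all invariant factors of the active submatrix $M'$ to equal~$1$, so the integer cokernel is free, and an integer right-hand side in the real image automatically lies in the integer image. This handles the affine-flat minimal faces uniformly, exactly as you anticipated. The supporting facts you use --- that minimal faces of $\{u\mid Mu\le d\}$ coincide with the affine subspaces $\{u\mid M'u=d'\}$ for the active rows, and that stacking $\pm I$, $\pm A$ preserves total unimodularity via Laplace expansion --- are both standard and correctly stated.
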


It is easy to see the stronger statement that, in the case of total unimodularity, every face is the convex hull of its integral points. 

\begin{lemma} \label{lem:TUE1}
    For a totally unimodular matrix $A$, $\Xi_\bbZ(A)= \Xi_\bbR(A)$.
\end{lemma}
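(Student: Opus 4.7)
The plan is to combine Lemma~\ref{lem:ER<EZ}, which already yields $\Xi_\bbR(A) \leq \Xi_\bbZ(A)$, with a short linear-programming argument for the reverse inequality. Fix $v \in A(\bbZ^n)\setminus\{0\}$. I aim to produce an integer preimage $u \in \bbZ^n$ of $v$ satisfying $\|u\| \leq \Xi_\bbR(A,v)\cdot\|v\|$; taking the supremum over $v \in A(\bbZ^n)$ then gives $\Xi_\bbZ(A) \leq \Xi_\bbR(A)$.

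The standard reformulation splits a real vector into positive and negative parts. Writing $u = u^+ - u^-$ with $u^+, u^- \geq 0$, one has $\|u\|_1 = \min\{\sum_i (u_i^+ + u_i^-) : u = u^+ - u^-,\ u^\pm \geq 0\}$, attained by $u^+_i = \max(u_i,0)$ and $u^-_i = \max(-u_i,0)$. Consequently $\Xi_\bbR(A,v)\cdot\|v\|$ equals the optimal value of the linear program
\[
\min\ \sum_{i=1}^n (u_i^+ + u_i^-) \quad\text{subject to}\quad [A,\, -A]\begin{pmatrix}u^+\\ u^-\end{pmatrix} = v,\ u^\pm \geq 0,
\]
whose feasible polyhedron I denote $Q \subset \bbR^{2n}$.

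The key observation is that $[A,-A]$ inherits total unimodularity from $A$: any square submatrix that simultaneously selects a column of $A$ and its negative from $-A$ has two antiparallel columns and hence vanishing determinant, while otherwise it equals $\pm$ a square submatrix of $A$ and so has determinant in $\{-1,0,1\}$. Applying Theorem~\ref{thm:TU} to $[A,-A]$ with the integer bounds $0 \le (u^+,u^-) \le +\infty$ and $v \le [A,-A](u^+,u^-)^T \le v$ shows that every face of $Q$ meets $\bbZ^{2n}$; in particular every vertex of $Q$ is integral.

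Finally, since $Q$ sits in the positive orthant it is pointed, and the LP objective is non-negative and coercive on $Q$, so the optimum is attained on a non-empty compact face; by pointedness, that face has an extreme point $(u^+, u^-) \in Q \cap \bbZ^{2n}$. Setting $u \defq u^+ - u^- \in \bbZ^n$ yields $Au = v$ and
\[
\|u\| = \sum_{i=1}^n |u_i^+ - u_i^-| \leq \sum_{i=1}^n (u_i^+ + u_i^-) = \Xi_\bbR(A,v)\cdot\|v\|,
\]
as required. The only step requiring any thought is the total unimodularity of $[A,-A]$, which is elementary; the rest is Hoffman--Kruskal together with the standard fact that an LP with attained optimum on a pointed polyhedron attains it at a vertex.
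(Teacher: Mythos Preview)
Your proof is correct. Both your argument and the paper's rest on the same idea---linearize the $\ell^1$-norm minimization so that Hoffman--Kruskal (Theorem~\ref{thm:TU}) applies---but you linearize differently. The paper observes that $\|\cdot\|_1$ agrees with a linear functional on each orthant: it picks an optimal real preimage $u_0$ of $v$, restricts to the polyhedron $C=\{u\in\ort(u_0):Au=v\}$, notes that the optimal face $C_\alpha$ of $C$ is a face in the sense of Theorem~\ref{thm:TU}, and applies that theorem directly to $A$ to find an integral point of $C_\alpha$. You instead use the standard splitting $u=u^+-u^-$, which turns the problem into a genuine LP in the nonnegative orthant at the cost of passing to the doubled matrix $[A,-A]$; you then verify (easily) that $[A,-A]$ inherits total unimodularity and pick an integral vertex of the optimal face. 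The paper's route is marginally shorter since it avoids the doubling and the auxiliary total-unimodularity check, while yours is the more textbook LP formulation and makes the attainment of the optimum immediate via coercivity; neither approach has a substantive advantage over the other.
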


An easy but important observation is that the $\ell^1$-norm coincides with a linear functional on each closed orthant. For $\epsilon=(\epsilon_1,\ldots,\epsilon_q)\in\{-1,1\}^q$, let
\[ O_\epsilon=\bigl\{x=(x_1,\ldots,x_q)\in\mathbb{R}^q\mid \epsilon_i x_i\geq 0\text{ for every }i\bigr\}. \]
On $O_\epsilon$, the $\ell^1$-norm coincides with the linear functional $x\mapsto\sum_i\epsilon_i x_i$.

\begin{proof}
Let $A$ be a totally unimodular $m\times n$ matrix. Let $v\in A(\mathbb{Z}^n)$. 
We claim that $\Xi_\bbZ(A,v) = \Xi_\bbR(A,v)$.
Set $\alpha=\Xi_\bbR(A,v)$.
Clearly, $\alpha \leq \Xi_\bbZ(A,v)$.
We will show that equality holds.
Let $u_0\in \mathbb{R}^n$ be an optimal vector such that $Au_0=v$ and $\|u_0\|=\alpha\cdot \|v\|$. 
Choose $\epsilon\in\{-1,1\}^n$ such that $u_0\in O_\epsilon$; if a coordinate of $u_0$ is zero, the corresponding sign may be chosen arbitrarily.
We consider the sets
\[ C=\bigl\{u\in O_\epsilon\mid Au=v\bigr\} \quad\mbox{and}\quad C_\alpha=\bigl\{u\in C\mid \|u\|=\alpha\cdot \|v\| \bigr\}.\]
The function $\|\cdot\|$ coincides with a linear functional on $C$, and $\alpha\cdot\|v\|$ is its minimum, by definition, which is attained at $u_0\in C$.
Moreover, $C$ is a closed convex set of the form appearing in Theorem~\ref{thm:TU}.
It follows that $C_\alpha$ is a face of $C$. By Theorem~\ref{thm:TU},
there exists an element $u_1\in C_\alpha \cap \mathbb{Z}^n$.
Hence $\Xi_\bbZ(A,v)\leq \alpha$ and $\Xi_\bbZ(A) \leq \Xi_\bbR(A)$.
The reverse inequality holds by Lemma~\ref{lem:ER<EZ}. 
\end{proof}

\begin{theorem} \label{thm:TUE2}
    Let $A$ and $B$ be integer matrices of sizes $(m\times n)$ and $(k\times m)$ such that the sequence 
    $\mathbb{R}^n \xrightarrow{A} \mathbb{R}^m \xrightarrow{B} \mathbb{R}^k$
    is exact. 
    If $A$ is totally unimodular, then $\Xi_\bbZ(A)= \Xi_\bbR(A)$ and $\Xi_\bbZ(B) = \Xi_\bbR(B)$.
\end{theorem}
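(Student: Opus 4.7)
The equality $\Xi_\bbZ(A) = \Xi_\bbR(A)$ is immediate from Lemma~\ref{lem:TUE1}. For $B$, Lemma~\ref{lem:ER<EZ} already gives $\Xi_\bbR(B) \leq \Xi_\bbZ(B)$, so only the reverse inequality needs work. My plan is to use exactness to convert the problem of finding an integer preimage of a vector under $B$ into a linear program whose constraint matrix involves $A$ (not $B$), and hence is totally unimodular, so that Hoffman-Kruskal produces an integer optimum.

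Fix $v \in B(\bbZ^m)$. I would first pick a real optimum $u_0 \in B^{-1}(v)$ with $\|u_0\| = \Xi_\bbR(B,v) \cdot \|v\|$ (the infimum is attained because $\ell^1$-level sets are compact and $B^{-1}(v)$ is a closed affine subspace) together with any integer preimage $u' \in B^{-1}(v) \cap \bbZ^m$. By exactness at $\bbR^m$, every real preimage of $v$ can be written $u = u' + Ay$ for some $y \in \bbR^n$. Imitating the orthant trick of Lemma~\ref{lem:TUE1}, I would set $\sigma_i = \sign(u_{0,i})$ and let $D$ be the diagonal matrix with entries $\sigma_i$, so that on the closed orthant $\ort(u_0)$ the $\ell^1$-norm coincides with the linear functional $u \mapsto \sigma^T u$, and the orthant constraint becomes $Du \geq 0$.

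Substituting $u = u' + Ay$ then turns the minimization problem into the linear program
\[ \text{minimize} \quad \sigma^T(u' + Ay) \quad \text{subject to} \quad DAy \geq -Du'. \]
Its constraint matrix $DA$ is totally unimodular (rescaling rows of a totally unimodular matrix by $\pm 1$ preserves the property), its right-hand side $-Du'$ is integral, and the LP is feasible with real optimum $\|u_0\|$ (attained at any $y$ with $Ay = u_0 - u'$). The set of real optimal solutions is a face of the polyhedron $\{y \in \bbR^n : DAy \geq -Du'\}$, so Theorem~\ref{thm:TU} supplies an integer point $y^*$ in it. Then $u := u' + Ay^*$ lies in $\bbZ^m$, satisfies $Bu = v$ (because $BA = 0$ by exactness), and has $\|u\| = \sigma^T u = \|u_0\|$; this establishes $\Xi_\bbZ(B,v) \leq \Xi_\bbR(B,v)$, and taking the supremum over $v$ completes the proof.

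The conceptual heart of the argument is the exactness-driven reparameterization $u = u' + Ay$, which trades the opaque constraint matrix $B$ for the totally unimodular matrix $A$. The main technical detail I would be careful about is verifying that Hoffman-Kruskal truly applies to the face of optimal $y$: one needs the objective to be linear on the fixed orthant (handled by the $D/\sigma$ trick) and the optimal set to be a face of the polyhedron cut out by the integral system $DAy \geq -Du'$, which is a standard fact from linear programming.
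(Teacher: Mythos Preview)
Your approach is the same as the paper's: parameterize $B^{-1}(v)$ via exactness as $u' + A(\bbR^n)$, linearize the $\ell^1$-norm on the orthant of a real optimum, and apply Hoffman--Kruskal to the resulting program in $y$ with constraint matrix $A$.

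There is, however, a genuine gap in your treatment of the coordinates where $u_0$ vanishes. If $\sigma_i = 0$ then the $i$-th row of $D$ is zero, so the constraint $(DAy)_i \geq -(Du')_i$ is vacuous and does \emph{not} force $(u'+Ay)_i = 0$. Hence on your feasible region the objective $\sigma^T(u'+Ay)$ need not equal $\|u'+Ay\|$, and your final chain $\|u\| = \sigma^T u = \|u_0\|$ fails. For a concrete counterexample take $A = \bigl(\begin{smallmatrix}1&0\\0&1\\0&0\end{smallmatrix}\bigr)$, $B=(0,0,1)$, $v=1$: then $u_0=(0,0,1)$, $\sigma=(0,0,1)$, your LP has constant objective $1$, every $y\in\bbR^2$ is optimal, and choosing $y^*=(5,7)$ yields $u=(5,7,1)$ with $\|u\|=13\neq 1$.

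The fix is to impose the equality constraint $(Ay)_i = -u'_i$ for each $i$ with $\sigma_i=0$, keeping the one-sided inequality otherwise. The system is then still of Hoffman--Kruskal form $c \leq Ay \leq c'$ with integral bounds, and on this polyhedron one genuinely has $\sigma^T(u'+Ay)=\|u'+Ay\|$. This is exactly how the paper proceeds: its condition $\ort(Au+v_1)=\ort(v_0)$ forces $(Au+v_1)_i=0$ whenever $(v_0)_i=0$, which is the missing constraint.
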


\begin{proof}
  That $\Xi_\bbZ(A)= \Xi_\bbR(A)$ is proved in Lemma~\ref{lem:TUE1},
  so we are left to show $\Xi_\bbZ(B) = \Xi_\bbR(B)$.
  As in the proof of Lemma~\ref{lem:ER<EZ} this will follow from the claim that $\Xi_\bbZ(B,w) \leq \Xi_\bbR(B,w)$ for every $w\in B(\mathbb{Z}^m)$. 
  
  Let $w\in B(\mathbb{Z}^m)$ and set $\alpha=\Xi_\bbR(B,w)$.
  We pick $v_0\in\bbR^m$ and $v_1 \in\bbZ^m$ satisfying $Bv_0=Bv_1=w$ and  $\|v_0\|=\alpha\cdot \|w\|$.
  By exactness, we find $u_0\in \mathbb{R}^n$ such that $Au_0=v_0-v_1$.
  Choose $\epsilon\in\{-1,1\}^m$ such that $v_0\in O_\epsilon$; if a coordinate of $v_0$ is zero, the corresponding sign may be chosen arbitrarily.
  We consider the sets
\[ C=\bigl\{u\in \mathbb{R}^n\mid Au+v_1\in O_\epsilon\bigr\} ~\text{and}~ C_\alpha=\bigl\{u\in C\mid \|Au+v_1\|=\alpha\cdot \|w\| \bigr\}.\]
The function $u\mapsto \|Au+v_1\|$ coincides with a linear functional on $C$, and $\alpha\cdot\|w\|$ is its minimum, by definition, which is attained at $u_0\in C$.
Moreover, since $v_1$ is integral, $C$ is a closed convex set of the form appearing in Theorem~\ref{thm:TU}.

It follows that $C_\alpha$ is a face of $C$. By Theorem~\ref{thm:TU},
there exists an element $u_1\in C_\alpha \cap \mathbb{Z}^n$.
The element $Au_1+v_1\in \mathbb{Z}^m$ satisfies $\|Au_1+v_1\|=\alpha\cdot \|w\|$.
Hence $\Xi_\bbZ(B,w)\leq \alpha$.
\end{proof}

In view of the discussion above, finding criteria for total unimodularity of a matrix is desirable.
The following sufficient condition is well known.

\begin{lemma} \label{lem:TUcriterion}
    If $A$ is an integer matrix whose entries are in~$\{-1,0,1\}$ such that in each row there exists at most one entry $1$ and one entry $-1$, then $A$ is totally unimodular.
\end{lemma}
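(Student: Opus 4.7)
The plan is to prove, by induction on $k$, that every $k\times k$ submatrix $M$ of $A$ satisfies $\det(M)\in\{-1,0,1\}$. The crucial point is that any square submatrix of $A$ inherits the hypothesis: its entries lie in $\{-1,0,1\}$ and each of its rows contains at most one $+1$ and at most one $-1$ (since these properties are preserved under deleting rows and columns).

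The base case $k=1$ is immediate from the entry condition. For the inductive step with $k\geq 2$, I would split into three cases according to the structure of the rows of the submatrix $M$. First, if some row of $M$ is identically zero, then $\det(M)=0$. Second, if some row has exactly one nonzero entry, which is then necessarily $\pm 1$, I expand the determinant along that row; this produces $\pm \det(M')$ for some $(k-1)\times(k-1)$ submatrix $M'$ of $A$, and by the induction hypothesis $\det(M')\in\{-1,0,1\}$.

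The remaining case is when every row of $M$ has both a $+1$ and a $-1$ entry. By the hypothesis each row has at most one of each, so in this case every row has exactly one $+1$ and exactly one $-1$ (and all other entries are zero). Then each row sums to zero, which means $M\cdot(1,1,\ldots,1)^T=0$, so $M$ is singular and $\det(M)=0$.

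The only subtle point is verifying that the three cases genuinely cover all possibilities, which is a straightforward case analysis: the hypothesis rules out any row with two $+1$s or two $-1$s, so a row is either zero, has exactly one nonzero entry, or has both a $+1$ and a $-1$. Beyond this, the argument is a routine Laplace expansion combined with the kernel observation, and no serious obstacle is expected.
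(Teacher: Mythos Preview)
Your proof is correct and follows essentially the same approach as the paper's. Both arguments hinge on the same two observations: if some row has at most one nonzero entry one reduces to a smaller minor via Laplace expansion, and if every row has both a $+1$ and a $-1$ then $(1,\ldots,1)^T$ lies in the kernel; the paper phrases this as a minimal-counterexample argument while you phrase it as a direct induction, but the content is identical.
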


\begin{proof}
    Assuming the lemma is false we could find a counterexample $A$ of minimal size, which is clearly a square matrix of determinant other than $-1$, $0$ or $1$. The size of $A$ is greater than $(1\times 1)$ and removing a row with fewer than two non-zero elements leaves a totally unimodular matrix of smaller size, contradicting minimality. So all rows have both $1$ and $-1$. It follows that the vector $(1,1,\ldots,1)$ is in the kernel of $A$, thus $\det(A)=0$. This is a contradiction.
\end{proof}

\begin{theorem} \label{thm:dTU}
    For a finite CW complex $X$ let \[\bbR\xrightarrow{d^{-1}}C^0_\cell(X,\bbR)\xrightarrow{d^0} C^{1}_\cell(X,\bbR)\to\dots\] be the augmented cellular cochain complex. 
    Then $d^{-1}$ and $d^0$ are totally unimodular.  
\end{theorem}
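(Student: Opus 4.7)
The plan is to identify the matrices of $d^{-1}$ and $d^0$ explicitly with respect to the standard cellular bases (i.e.\ those dual to the $0$- and $1$-cells of~$X$) and then verify the combinatorial hypothesis of Lemma~\ref{lem:TUcriterion} in each case.

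First I would handle $d^{-1}\colon \bbR\to C^0_\cell(X,\bbR)$, which is dual to the augmentation $\epsilon\colon C_0^\cell(X)\to \bbZ$ sending every $0$-cell to~$1$. Its matrix has a single column and rows indexed by the $0$-cells, with every entry equal to~$+1$. Hence every row contains exactly one~$+1$ and no~$-1$, and Lemma~\ref{lem:TUcriterion} immediately yields that $d^{-1}$ is totally unimodular.

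Next I would treat $d^0\colon C^0_\cell(X,\bbR)\to C^1_\cell(X,\bbR)$. Recall that the cellular boundary $\partial_1$ sends a $1$-cell $e$ attached via $\phi_e\colon S^0\to X^{(0)}$ to $\phi_e(+1)-\phi_e(-1)\in C_0^\cell(X)$. Dualizing, the matrix of $d^0$ with rows indexed by $1$-cells and columns by $0$-cells has, in the row of a non-loop $1$-cell $e$, a single $+1$ in the column of $\phi_e(+1)$ and a single $-1$ in the column of $\phi_e(-1)$; for a loop (when $\phi_e(+1)=\phi_e(-1)$) the two contributions cancel and the row is identically zero. In either case the row contains at most one~$+1$ and at most one~$-1$, so a second application of Lemma~\ref{lem:TUcriterion} finishes the proof.

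There is no serious obstacle here: the content of the theorem is simply that the combinatorial criterion of Lemma~\ref{lem:TUcriterion} is tailor-made for the cellular coboundaries in degrees $-1$ and~$0$, and the verification reduces to direct inspection of the matrix entries. The one point worth being careful about is loop $1$-cells, but as noted these contribute a zero row and cause no trouble.
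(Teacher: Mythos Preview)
Your proof is correct and follows essentially the same approach as the paper: identify the matrices of $d^{-1}$ and $d^0$ with respect to the cellular bases and apply Lemma~\ref{lem:TUcriterion}. The paper's version is terser (it does not spell out the dualization or the loop case), but the argument is identical in substance.
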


We regard $d^k$ as a matrix with respect to the dual cellular basis.

\begin{proof}
The map $d^{-1}$ is represented by the column vector $(1,1,\ldots,1)$. 
The matrix representing $d^{0}$ has at each row either exactly one $1$ and one $-1$ or only zeroes. 
Hence $d^{-1}$ and $d^0$ are totally unimodular by Lemma~\ref{lem:TUcriterion}.
\end{proof}

\begin{remark}
  For $i\geq 1$, the matrix representing $d^{i}$ is an integer matrix which fails to be totally unimodular in general. The failure can be measured explicitly by the torsion in the relative homology of subcomplexes. See~\cite{DHK}. 
\end{remark}

\begin{theorem} \label{thm:integrality}
     Let $X$ be a finite CW complex with $H^1(X,\bbR)=0$. Then the cellular differentials satisfy 
     $\Xi_\bbZ(d^{0}) = \Xi_\bbR(d^{0})$ and $\Xi_\bbZ(d^{1}) = \Xi_\bbR(d^{1})$.
\end{theorem}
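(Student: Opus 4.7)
The statement follows directly by combining Theorem~\ref{thm:dTU}, Lemma~\ref{lem:TUE1}, and Theorem~\ref{thm:TUE2}; the hypothesis $H^1(X,\bbR)=0$ is exactly what is needed to invoke the latter. Concretely, the plan is as follows.

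For the claim $\Xi_\bbZ(d^0)=\Xi_\bbR(d^0)$, I would simply observe that $d^0$ is totally unimodular by Theorem~\ref{thm:dTU}, and then apply Lemma~\ref{lem:TUE1}. (Alternatively, one could use Theorem~\ref{thm:TUE2} with $A=d^{-1}$ and $B=d^0$, noting that exactness at $C^0_\cell(X,\bbR)$ amounts to $X$ being connected up to $H^0(X,\bbR)=\bbR$; but Lemma~\ref{lem:TUE1} is the more direct route.)

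For the claim $\Xi_\bbZ(d^1)=\Xi_\bbR(d^1)$, the key observation is that the hypothesis $H^1(X,\bbR)=0$ means exactly that the sequence
\[ C^0_\cell(X,\bbR)\xrightarrow{d^0} C^1_\cell(X,\bbR)\xrightarrow{d^1} C^2_\cell(X,\bbR) \]
is exact in the middle. Since $d^0$ is totally unimodular by Theorem~\ref{thm:dTU}, all hypotheses of Theorem~\ref{thm:TUE2} are satisfied with $A=d^0$ and $B=d^1$, and the conclusion $\Xi_\bbZ(d^1)=\Xi_\bbR(d^1)$ follows immediately.

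There is no real obstacle here: the theorem is a clean packaging of the tools already developed in this section. The only conceptual point worth noting is why one needs $H^1(X,\bbR)=0$ rather than, say, $H^1(X,\bbZ)=0$: the exactness required to apply Theorem~\ref{thm:TUE2} is exactness of the real cochain complex, since the proof of that theorem passes through the real vector space $\bbR^n$ when constructing the preimage $u_0$. The vanishing of real cohomology is therefore both necessary and sufficient for the argument to go through.
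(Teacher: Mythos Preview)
Your proposal is correct and follows essentially the same approach as the paper: the paper's proof is the one-liner ``This follows by combining Theorem~\ref{thm:dTU} with Theorem~\ref{thm:TUE2},'' using $A=d^0$, $B=d^1$, and the exactness supplied by $H^1(X,\bbR)=0$. Your use of Lemma~\ref{lem:TUE1} for the $d^0$ claim is a harmless variation, since the first conclusion of Theorem~\ref{thm:TUE2} is precisely that lemma.
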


\begin{proof}
    This follows by combining Theorem~\ref{thm:dTU} with Theorem~\ref{thm:TUE2}.
\end{proof}

\begin{theorem} \label{thm:ZT}
     Let $X$ be a finite CW complex with Kazhdan fundamental group.  
     There is a constant~$C>0$ such that for every finite cover $\bar X\to X$, 
     the cellular differentials $d^k$ of~$\bar X$
     satisfy 
     $\Xi_\bbZ(d^{0}) = \Xi_\bbR(d^{0})\leq C$ and $\Xi_\bbZ(d^{1}) = \Xi_\bbR(d^{1})\leq C$.
\end{theorem}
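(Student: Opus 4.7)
The proof should combine the two main threads already developed in the paper: the real-coboundary expansion from Section~\ref{sec: coboundary expansion} and the integrality bridge from the first part of Section~\ref{sec: total unimodularity}. The plan is to first obtain a uniform real bound $\Xi_\bbR(d^i)\le C$ for the finite covers, and then upgrade it to an integer bound via Theorem~\ref{thm:integrality}, for which we must verify the hypothesis $H^1(\bar X,\bbR)=0$.

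For the first step I would directly invoke Theorem~\ref{thm: real isoperimetric in degree 1}. Its conclusion, unravelled, is precisely the statement that there is a constant $C>0$, depending only on $X$, such that every coboundary $c\in\im d^i\subset C^{i+1}_\cell(\bar X,\bbR)$ for $i\in\{0,1\}$ admits a primitive $b$ with $\norm{b}\le C\cdot \norm{c}$; rephrased in the language of Section~\ref{sec: total unimodularity}, this is exactly $\Xi_\bbR(d^i)\le C$ uniformly over all finite covers~$\bar X\to X$. The hypothesis of Theorem~\ref{thm: real isoperimetric in degree 1} (finite $2$-skeleton) is satisfied since $X$ is a finite CW complex.

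For the second step, Theorem~\ref{thm:integrality} would give the equality $\Xi_\bbZ(d^i)=\Xi_\bbR(d^i)$ for~$\bar X$ provided $H^1(\bar X,\bbR)=0$. I would verify this using the Kazhdan hypothesis as follows. Since $X$ is path-connected (and the finite cover $\bar X$ is too), universal coefficients and Hurewicz give
\[ H^1(\bar X,\bbR)=\Hom\bigl(\pi_1(\bar X)^{\mathrm{ab}},\bbR\bigr). \]
Now $\pi_1(\bar X)$ is a finite-index subgroup of the Kazhdan group~$\pi_1(X)$, hence itself Kazhdan. A Kazhdan group has finite abelianization (its abelianization is a discrete, hence necessarily compact, abelian Kazhdan group, hence finite), so the $\Hom$-group above vanishes. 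Thus $H^1(\bar X,\bbR)=0$, and Theorem~\ref{thm:integrality} applies to $\bar X$.

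Combining the two steps, for every finite cover $\bar X\to X$ and $i\in\{0,1\}$ we have
\[ \Xi_\bbZ(d^i)=\Xi_\bbR(d^i)\le C, \]
with the same $C$ as in the first step. There is no real obstacle in the argument; the only point requiring slight care is the vanishing of $H^1(\bar X,\bbR)$, which is where the Kazhdan property is used a second time (beyond its role in Section~\ref{sec: coboundary expansion}), through the classical fact that Kazhdan groups have finite abelianization and that being Kazhdan passes to finite-index subgroups.
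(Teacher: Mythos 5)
Your proposal is correct and follows essentially the same route as the paper, which likewise proves Theorem~\ref{thm:ZT} by combining Theorem~\ref{thm: real isoperimetric in degree 1} (uniform real expansion over finite covers) with Theorem~\ref{thm:integrality} (integrality via total unimodularity). Your explicit verification that $H^1(\bar X,\bbR)=0$ via the finite abelianization of the Kazhdan group $\pi_1(\bar X)$ is a hypothesis the paper leaves implicit at this point (it appears later in the proof of the Main Theorem), so spelling it out is a welcome but not divergent addition.
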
 

\begin{proof}
This follows by combining Theorem~\ref{thm:integrality} with Theorem~\ref{thm: real isoperimetric in degree 1}.    
\end{proof}

\section{Isoperimetric inequality for Lipschitz chains} \label{sec:Lip to II}
 In this section we convert the results about integral coboundary expansion of Section~\ref{sec: total unimodularity} into isoperimetric inequalities for Lipschitz chains. 
 
 A \emph{Lipschitz $d$-chain} in a Riemannian manifold~$M$ is a finite integral linear combination of Lipschitz singular $d$-simplices in~$M$. 
The \emph{$d$-dimensional volume} of a Lipschitz map $\sigma\colon \Delta^d\to M$ is defined as 
\[ 
\vol_d(\sigma)=\int_{\Delta^d}|\jacobian_\sigma(x)|dx.
\]
The \emph{mass} of a Lipschitz $d$-chain $c=\sum_i a_i\sigma_i$ written with distinct~$\sigma_i$ is defined as 
\[ 
\mass(c)=\sum_i |a_i|\cdot\vol_d(\sigma_i).
\]
The Riemannian manifold~$M$ satisfies the  
\emph{$C$-isoperimetric inequality in dimension~$d$} if for every Lipschitz chain $c\in C_d^\lip(M,\bbZ)$ that is a boundary there is $b\in C_{d+1}^\lip(M,\bbZ)$ with $c=\partial b$ such that 
\[ \mass (b)\le C\cdot\mass(c).\]

It is well known that one can convert statements about the expansion of the cellular chain complex into isoperimetric inequalities for Lipschitz chains via the following result, which builds on the Federer-Fleming deformation technique. 

\begin{theorem}[\cite{epstein-book}*{Theorem~10.3.3 on p.~223}]\label{thm: federer+fleming result}
Let $M$ be a closed Riemannian manifold which is endowed with a triangulation. Let $D\ge 1$ be a real number so that every simplex is $D$-bilipschitz equivalent to Euclidean standard simplex of the same dimension. Then there is a constant $E>0$ that only depends on the dimension of~$M$ and~$D$ such that if $c\in C_k^\lip(M,\bbZ)$ is a Lipschitz chain with $\partial c\in C_{k-1}^\cell(M,\bbZ)$, then there are chains $P(c)\in C_k^\cell(M,\bbZ)$ and $Q(c)\in C_{k+1}^\lip(M,\bbZ)$ such that: 
\begin{enumerate}
    \item $\partial Q(c)=c-P(c)$; in particular, $\partial c=\partial P(c)$,
    \item $\mass (P(c))\le E\cdot\mass(c)$,
    \item  $\mass (Q(c))\le E\cdot\mass(c)$.
\end{enumerate}
\end{theorem}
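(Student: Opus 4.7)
The plan is to deform $c$ one skeletal dimension at a time, replacing $c$ by a Lipschitz chain whose support has been pushed from the $m$-skeleton to the $(m-1)$-skeleton, starting at $m=\dim M$ and stopping at $m=k+1$. Once the support lies in the $k$-skeleton, the resulting chain is automatically cellular, and that chain is $P(c)$; accumulating the homotopies produced at each stage will give $Q(c)$. Throughout, the $D$-bilipschitz equivalences with the Euclidean standard simplex let one work, simplex by simplex, in the Euclidean model, and transport back the resulting mass bounds at a cost of a power of $D$ depending only on $\dim M$.

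The core step is the classical Federer--Fleming radial projection inside a single $m$-simplex $\Delta$. Given the portion $c|_\Delta$ of the chain lying in $\Delta$, one picks a point $p\in\interior\Delta$ and defines $P_\Delta(c)$ to be the radial projection of $c|_\Delta$ from $p$ onto $\partial\Delta$; the straight-line homotopy through $p$ gives a Lipschitz $(m+1)$-chain cone $Q_\Delta(c)$ with $\partial Q_\Delta(c)=c|_\Delta-P_\Delta(c)$ (boundary of $\Delta^m$ contributions cancel with pieces of $c$). The Jacobian of the radial projection blows up like $|x-p|^{-(m-1)}$, so no fixed $p$ will do; but integrating the mass of $P_\Delta(c)$ over $p$ against the uniform measure on a concentric sub-simplex uniformly away from $\partial\Delta$, one applies Fubini to rewrite the average as $\int_{\supp(c|_\Delta)}\int|x-p|^{-(m-1)}\,dp\,d\vol_m$; the inner integral is uniformly bounded by a constant depending only on $m$. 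Pigeonhole therefore yields a point $p$ for which both $\mass(P_\Delta(c))$ and $\mass(Q_\Delta(c))$ are bounded by a dimension-dependent constant times $\mass(c|_\Delta)$.

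I would then perform this construction in every top-dimensional simplex of $M$ simultaneously, replacing $c$ by the sum of the $P_\Delta(c)$'s (which lies in the codimension-$1$ skeleton) and recording the sum of the $Q_\Delta(c)$'s as the first-stage homotopy. The projections in neighbouring simplices match on common faces because each radial projection fixes $\partial\Delta$, so the global chain stays well-defined. Crucially, since $\partial c$ already lives in $C_{k-1}^\cell(M,\bbZ)$, hence in the $(k-1)$-skeleton, all higher-dimensional deformations leave $\partial c$ unchanged, preserving condition~(1). Iterating this procedure through dimensions $\dim M,\dim M-1,\ldots,k+1$, each step multiplying the mass by at most a constant $E_m$, produces after finitely many steps a chain $P(c)\in C_k^\cell(M,\bbZ)$ and a telescoped Lipschitz $(k+1)$-chain $Q(c)$ satisfying $(1)$--$(4)$ with $E=\prod_m E_m$ depending only on $\dim M$ and $D$.

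The main technical obstacle, and the only non-formal point, is making the averaging estimate truly uniform: one must confine the centre $p$ to a subset of $\interior\Delta$ whose distance to $\partial\Delta$ is bounded below by a universal constant, so that the integral $\int|x-p|^{-(m-1)}\,dp$ is uniformly bounded independently of how the support of $c|_\Delta$ is distributed inside $\Delta$ (in particular, independently of how close $c|_\Delta$ may come to $\partial\Delta$, where the Jacobian of projection is largest). Once the Euclidean estimate is in place on the standard simplex, transporting via the $D$-bilipschitz charts is routine and only introduces a factor $D^{O(\dim M)}$ that is absorbed into $E$.
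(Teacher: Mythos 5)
The paper does not prove this statement at all --- it is imported verbatim from Epstein et al.\ (Theorem 10.3.3), and the proof there is exactly the Federer--Fleming deformation you sketch: push the chain out of the open cells one skeletal dimension at a time by radial projection from a generic centre, control the mass by averaging the projection Jacobian over the centre, and collect the straight-line homotopies into $Q(c)$. So your overall route is the canonical one and the averaging/pigeonhole estimate you isolate as the technical core is indeed the heart of the matter.

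There is, however, one asserted step that is false as stated and genuinely needs an argument: a Lipschitz $k$-chain supported in the $k$-skeleton is \emph{not} automatically an element of $C^\cell_k(M,\bbZ)$; it is still a formal sum of Lipschitz singular simplices. After the last projection stage you must cellularize the deformed chain $c'$. Since $\partial c'=\partial c$ is supported in the $(k-1)$-skeleton, the local degree of $c'$ over each open $k$-cell $F$ is a.e.\ constant, say $d_F$; define $P(c)=\sum_F d_F\,F$. Then $\mass(P(c))=\sum_F |d_F|\,\vol_k(F)\le \mass(c')$, the identity $\partial P(c)=\partial c$ is checked by comparing local degrees along the $(k-1)$-cells, and on each $k$-cell the difference $c'-P(c)$ is a cycle supported in the contractible set $F$, hence bounds a Lipschitz $(k+1)$-chain supported in $F$, which has zero $(k+1)$-mass; adding these cost-free fillings to the accumulated homotopies completes $Q(c)$. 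Two smaller imprecisions: the Jacobian of the radial projection of a $k$-dimensional piece blows up like $|x-p|^{-k}$, not $|x-p|^{-(m-1)}$ in general --- it is the fact that the exponent equals the chain dimension, while the averaging is over a cell of strictly larger dimension, that makes the average finite at every stage down to dimension $k+1$ (and explains why the process must stop at the $k$-skeleton); and ``the portion $c|_\Delta$'' is not canonically defined for singular Lipschitz chains, so one should either subdivide $c$ first or, as is standard, apply the globally defined (a.e.) projection map to $c$ and localize only the mass integrals over the cells.
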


We obtain as a direct consequence:

\begin{theorem}\label{thm: from cellular to geometric}
Let $M$ be a closed Riemannian manifold which is endowed with a triangulation so that every simplex is $D$-bilipschitz equivalent to Euclidean standard simplex of the same dimension for some $D\ge 1$. Let $\partial^\cell_\ast$ denote the differentials of the cellular chain complex of the triangulation. 
Then $M$ satisfies the $C$-isoperimetric inequality in dimension~$k$ where $C>0$ only depends on $\Xi(\partial^ \cell_{k+1})$, $D$ and the dimension of~$M$. 
\end{theorem}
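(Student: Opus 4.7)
The plan is to chain together the Federer--Fleming deformation of Theorem~\ref{thm: federer+fleming result} with the cellular expansion constant $\Xi(\partial^\cell_{k+1})$, using the $D$-bilipschitz equivalence of the simplices to interchange the combinatorial $\ell^1$-norm on cellular chains with the Riemannian mass.

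First, given a Lipschitz boundary $c\in C_k^\lip(M,\bbZ)$, apply Theorem~\ref{thm: federer+fleming result} to obtain $P(c)\in C_k^\cell(M,\bbZ)$ and $Q(c)\in C_{k+1}^\lip(M,\bbZ)$ with $\partial Q(c)=c-P(c)$ and $\mass(P(c)),\mass(Q(c))\le E\cdot\mass(c)$. The identity $\partial P(c)=\partial c=0$ shows that $P(c)$ is a cellular cycle. Moreover, $P(c)=c-\partial Q(c)$ is homologous to~$c$ in Lipschitz (and hence singular) homology, and since~$c$ was assumed to be a boundary so is $P(c)$; agreement of cellular and singular homology for the triangulation therefore implies that $P(c)$ is a cellular boundary, i.e.\ $P(c)\in\im(\partial^\cell_{k+1})$.

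Next, by the very definition of the expansion constant, one can select $b_1\in C_{k+1}^\cell(M,\bbZ)$ with $\partial b_1=P(c)$ and $\|b_1\|\le \Xi(\partial^\cell_{k+1})\cdot\|P(c)\|$, where $\|\cdot\|$ denotes the $\ell^1$-norm with respect to the cellular basis. I then convert back and forth between this norm and Riemannian mass. Because every simplex is $D$-bilipschitz to the Euclidean standard simplex of the same dimension, there are constants $a_j,A_j>0$ (depending only on $D$ and $\dim M$) such that $a_j\le\vol_j(\sigma)\le A_j$ for every top-dimensional face at level~$j$ of the triangulation; consequently $\|c'\|\le a_k^{-1}\mass(c')$ for a cellular $k$-chain and $\mass(b')\le A_{k+1}\|b'\|$ for a cellular $(k+1)$-chain. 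Hence
\[
\mass(b_1)\le A_{k+1}\|b_1\|\le A_{k+1}\,\Xi(\partial^\cell_{k+1})\,a_k^{-1}\mass(P(c))\le A_{k+1}\Xi(\partial^\cell_{k+1})a_k^{-1}E\cdot\mass(c).
\]

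Finally, set $b\defq b_1+Q(c)\in C_{k+1}^\lip(M,\bbZ)$. Then $\partial b=P(c)+(c-P(c))=c$, and $\mass(b)\le\mass(b_1)+\mass(Q(c))\le C\cdot\mass(c)$ for a constant $C$ depending only on $E$, $D$, $\dim M$ and $\Xi(\partial^\cell_{k+1})$, as required. The only subtle point is step~2, the passage from "$c$ is a Lipschitz boundary" to "$P(c)$ is a cellular boundary"; this is a standard comparison of Lipschitz, singular and cellular homologies on a smoothly triangulated manifold, and is the main place where one has to be a little careful. The rest of the argument is the routine bilipschitz bookkeeping needed to convert $\ell^1$ combinatorial estimates into mass estimates.
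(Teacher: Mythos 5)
Your proposal is correct and follows essentially the same route as the paper: apply the Federer--Fleming deformation of Theorem~\ref{thm: federer+fleming result}, use the expansion constant $\Xi(\partial^\cell_{k+1})$ to fill the cellular cycle $P(c)$, convert between the cellular $\ell^1$-norm and Riemannian mass via the $D$-bilipschitz hypothesis, and take $b_1+Q(c)$ as the filling of~$c$. Your explicit justification that $P(c)$ is a cellular (not just singular/Lipschitz) boundary, via the comparison of Lipschitz, singular and simplicial homology, is a point the paper's proof leaves implicit, and it is handled correctly.
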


\begin{proof}
Let $c\in C_k^\lip(M,\bbZ)$ be a boundary. In particular, $c$ is a cycle. Choose $P(c)\in C_k^\cell(M,\bbZ)$ and $Q(c)\in C_{k+1}^\lip(M,\bbZ)$ according to Theorem~\ref{thm: federer+fleming result}. 
Since $c$ is a boundary and $c-P(c)=\partial Q(c)$, the chain $P(c)\in C_k^\cell(M,\bbZ)$ is a boundary in the Lipschitz singular chain complex. Since the inclusion of $C_\ast^\cell(M,\bbZ)$ into $C_\ast^\lip(M,\bbZ)$ induces isomorphisms in homology, the chain $P(c)$ has to be a boundary in $C_\ast^\cell(M,\bbZ)$ as well. So 
we can choose $b\in C_{k+1}^\cell(M,\bbZ)$ so that 
\[ \partial b=P(c)~\text{ and }~\norm b\le \Xi(\partial_{k+1}^\cell)\cdot \norm{P(c)}.\]
The assumption on the metric shape of the simplices implies that 
\[ \mass (b)\le \Xi(\partial_{k+1}^\cell)\cdot D^{2\dim M}\cdot\mass\bigl(P(c)\bigr).\]
We set $C\defq E\cdot \bigl( \Xi(\partial_{k+1}^\cell)\cdot D^{2\dim M}+1\bigr)$,
where~$E$ is the constant from Theorem~\ref{thm: federer+fleming result} that only depends on~$D$ and $\dim M$. 
The statement follows from 
 $\partial\bigl(b+Q(c)\bigr)=P(c)+c-P(c)=c$ and 
\begin{align*}
 \mass \bigl( b+Q(c)\bigr)&\le \Xi(\partial_{k+1}^\cell)\cdot D^{2\dim M}\cdot \mass \bigl(P(c)\bigr)+E\cdot\mass(c)\\
&\le E\cdot \bigl( \Xi(\partial_{k+1}^\cell)\cdot D^{2\dim M}+1\bigr)\cdot\mass (c).\qedhere
\end{align*}
\end{proof}

\begin{theorem}\label{thm: geometric isoperimetric inequality}
Let $M$ be a connected closed oriented $m$-dimensional Riemannian manifold with Kazhdan fundamental group. Then there is a constant~$C>0$ such that every finite cover $\bar M$ of $M$ satisfies $C$-isoperimetric inequalities in dimensions $m-2$ and $m-1$.  
\end{theorem}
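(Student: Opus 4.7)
The plan is to combine the uniform $\bbZ$-coboundary expansion bounds of Theorem~\ref{thm:ZT} in degrees $0$ and $1$ with the Federer-Fleming-style passage of Theorem~\ref{thm: from cellular to geometric} via Poincaré duality between a smooth triangulation of~$\bar M$ and its dual cell decomposition.

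First, I fix a smooth triangulation~$X$ of~$M$ whose closed simplices are $D$-bilipschitz equivalent to the standard Euclidean simplex for some $D\ge 1$; such triangulations exist on any closed Riemannian manifold. Every finite cover $\bar M\to M$ inherits a lifted triangulation $\bar X$ with the same constant~$D$, and carries the dual cell decomposition $\bar X^*$ as a finite regular CW complex on~$\bar M$ obtained from the barycentric subdivision of~$\bar X$. Since $\pi_1(X^*)=\pi_1(M)$ is Kazhdan and the $\bar X^*$ correspond exactly to the finite covers of~$X^*$, Theorem~\ref{thm:ZT} applied to~$X^*$ furnishes a single constant $C_0>0$ with $\Xi_\bbZ(d^0_{\bar X^*}),\Xi_\bbZ(d^1_{\bar X^*})\le C_0$ uniformly in~$\bar M$.

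The key step is cellular Poincaré duality between $\bar X$ and $\bar X^*$. Under the bijection $\sigma\leftrightarrow\sigma^*$ between $k$-simplices of $\bar X$ and $(m-k)$-cells of $\bar X^*$, the matrices of $\partial^{\bar X}_k$ and $d^{m-k}_{\bar X^*}$ share the same support with $\pm 1$ entries, and after compatible orientation choices they agree entry-wise up to a global sign. Flipping the orientation of a single cell merely multiplies the relevant matrix by a diagonal $\pm 1$ matrix on the source or target, and such rescalings preserve the $\ell^1$-norms defining~$\Xi$; consequently $\Xi_\bbZ(\partial^{\bar X}_k)=\Xi_\bbZ(d^{m-k}_{\bar X^*})$. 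Specializing to $k=m$ and $k=m-1$ yields $\Xi_\bbZ(\partial^{\bar X}_m),\Xi_\bbZ(\partial^{\bar X}_{m-1})\le C_0$ uniformly. Theorem~\ref{thm: from cellular to geometric} applied with $C_0$, $D$, and the dimension~$m$ then delivers a uniform constant $C>0$ for which every finite cover $\bar M$ satisfies the $C$-isoperimetric inequalities in dimensions $m-1$ and $m-2$.

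The principal technical point to verify is the matrix-level Poincaré duality: that the discrepancy between the sign patterns of $\partial^{\bar X}$ and $d_{\bar X^*}$ always factors as a row sign times a column sign, and so is absorbable by diagonal $\pm 1$ rescalings. For orientable $\bar M$ this is immediate from a global orientation inducing compatible orientations on all simplices and dual cells. For non-orientable $\bar M$ it can either be established by a local cellular argument tracking how independent orientation choices of single cells act on both matrices simultaneously, or reduced to the orientable case by lifting to the orientation double cover of $\bar M$ — itself a finite orientable cover of~$M$ — and transferring the isoperimetric inequality down through the covering map via a $\tau$-equivariant averaging of Lipschitz fillings, the main delicate point being to produce an honestly integer (rather than half-integer) filling of~$c$ from an integer filling of its pullback.
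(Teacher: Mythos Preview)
Your proposal is correct and follows essentially the same route as the paper: apply Theorem~\ref{thm:ZT} to the dual cell structure, transfer the bounds on $\Xi_\bbZ(d^0),\Xi_\bbZ(d^1)$ to bounds on $\Xi_\bbZ(\partial_m),\Xi_\bbZ(\partial_{m-1})$ via cellular Poincar\'e duality between the triangulation and its dual, and conclude with Theorem~\ref{thm: from cellular to geometric}. The paper states the duality step more tersely---it simply records the isometric isomorphism $C^k_\cell(T_\cell,\bbZ)\cong C_{m-k}^\cell(T,\bbZ)$ sending cellular basis to simplicial basis---but the content is identical to your matrix-level description.

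One remark on your last paragraph: for genuinely non-orientable $\bar M$ the ``local cellular argument'' you sketch does not work, since the sign discrepancy between $\partial^{\bar X}_k$ and $d^{m-k}_{\bar X^*}$ factoring as a diagonal-times-diagonal is equivalent to orientability. The paper does not address this within the proof of Theorem~\ref{thm: geometric isoperimetric inequality} either; instead it disposes of non-orientability globally by the sentence preceding the proof of the Main Theorem (pass to the orientation cover of $M$, whose fundamental group is still Kazhdan, so one only ever needs the theorem for orientable $M$ and hence orientable $\bar M$). That reduction is cleaner than the $\tau$-equivariant averaging you outline and avoids the half-integer issue entirely.
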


\begin{proof}
We consider a triangulation of $M$. 
Let $\bar M$ be a finite cover. We consider the induced triangulation $T$ on $\bar M$
. 
Let $T_\cell$ denote the dual cell structure on~$\bar M$. 
To make explicit to which cell structure we refer when we take the cellular (co-)chain complex we write, for example,  $C^\cell_\ast(T, \bbZ)$ instead of $C^\cell_\ast(\bar M,\bbZ)$.  

The compactness of~$M$ implies that there is~$D>0$, independent of the choice of the finite cover, so that every simplex in $T$ is $D$-bilipschitz equivalent to the standard Euclidean simplex of the same dimension. 
Let $k\in \{0,1\}$. Let $d^k\colon C^k_\cell(T_\cell,\bbZ)\to C^{k+1}_\cell(T_\cell, \bbZ)$ be the differential of the cellular cochain complex of the dual cell structure. 
Similarly, let $\partial_k$ be the differential of the cellular chain complex of the simplicial structure. 
Poincaré duality yields a commutative square with vertical isomorphisms 
\[
\begin{tikzcd}
C^k_\cell(T_\cell,\bbZ)\ar[r,"d^k"] & C^{k+1}_\cell(T_\cell,\bbZ)\\
C_{m-k}^\cell(T,\bbZ)\ar[r,"\partial_{m-k}"]\ar[u,"\cong"]&C_{m-k-1}^\cell(T,\bbZ)\ar[u,"\cong"].
\end{tikzcd}
\]
The vertical isomorphism maps the simplicial basis to the dual cellular basis. Hence they are isometric with regard to the $\ell^1$-norms. 
In particular, 
\[\Xi\bigl(d^k\bigr)=\Xi\bigl(\partial_{m-k}\bigr).\]
By Theorem~\ref{thm:ZT}, these expansion constants are uniformly bounded over all finite covers for $k\in\{0,1\}$. 
Theorem~\ref{thm: from cellular to geometric} now implies the claimed isoperimetric inequalities in degrees $m-2$ and $m-1$.    
\end{proof}

\begin{remark}
   Compare the above with the result Kielak-Nowak~\cite{kielak+nowak}, showing that \emph{co}-boundary expansion in codimension~$2$ implies that the fundamental group is word-hyperbolic. 
\end{remark}

\section{From isoperimetric inequalities to waist inequalities}
\label{sec: iso to waist}

We show that isoperimetric inequalities lead to waist inequalities in the context of vanishing homology (see Theorem~\ref{thm: iso to waist}). At the end of 
this section we prove the Main  Theorem.

Let $f\colon M\to N$ be a smooth map between smooth manifolds, and let $\sigma\colon \Delta^k\to N$ be a smoothly embedded simplex. We say that $f$ \emph{intersects} $\sigma$ \emph{transversely} if $f$ intersects the interior $\sigma$ transversely and all its faces transversely. By definition, the map~$f$ intersects a $0$-simplex transversely if and only if the image point is a regular value of~$f$. In this case, 
the preimage $f^{-1}(\sigma)$ is a topological submanifold of dimension $\dim M-\dim N+k$ with boundary 
\[ \partial f^{-1}(\sigma)=f^{-1}(\partial \sigma).\]
In fact, it is a smooth submanifold with corners~\cite{meru}*{Proposition~3.3}. 
Recall the convention that the empty set is a manifold of every dimension. 
Moreover, the preimage is oriented if $M$ and $N$ are oriented. We say that $f$ is \emph{transverse to a triangulation} of~$N$ if $f$ intersects every simplex of the triangulation transversely. 

The following fact is probably true for arbitrary smooth maps~\cite{gromov_filling}*{$F_2''$ on~p.~134}. 

\begin{lemma}\label{lem: real analytic maps}
Let $f\colon M\to N$ be a real analytic map between smooth closed manifolds of dimensions $m\ge n$. Let $M$ be equipped with a smooth Riemannian metric. Let $\epsilon>0$. If $\vol_{m-n}(f^{-1}(\{x\}))<\infty$ for every $x\in N$, then there is a triangulation $T$ of $N$ such that $T$ is transverse to~$f$ and $\vol_{m-n+\dim\sigma}(f^{-1}(\sigma))<\epsilon$ for each $k$-simplex $\sigma$ in~$T$ with $k\ge 1$. 
\end{lemma}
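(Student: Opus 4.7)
The plan is to construct an arbitrarily fine triangulation of $N$ that is transverse to $f$, and then combine the pointwise finiteness hypothesis with real analyticity to deduce that the preimages of positive-dimensional simplices have small volume. Three ingredients are needed: a uniform upper bound on fiber volumes obtained from Hardt's theorem, existence of fine transverse triangulations via Hardt and Thom transversality, and a coarea estimate relating the $k$-volume of a simplex to the $(m-n+k)$-volume of its preimage.

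\medskip

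\textit{Uniform fiber bound.} I would first show that the function $g\colon N\to[0,\infty]$, $g(y)=\vol_{m-n}(f^{-1}(y))$, is bounded above by a finite constant $V$. Hardt's theorem applied to the real-analytic map $f$ produces a finite subanalytic stratification of $N$ over whose open strata $f$ is locally trivial, so $g$ is locally bounded on each stratum. Standard subanalytic (o-minimal) arguments combined with the pointwise finiteness hypothesis then promote local boundedness to a uniform global bound $g\le V$ on the compact manifold $N$. This is the only step that makes essential use of real analyticity of $f$ rather than merely smoothness.

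\medskip

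\textit{Transverse triangulation and coarea estimate.} For each $\delta>0$, a fixed triangulation of $N$ barycentrically subdivided many times has mesh at most $\delta$, and a small generic perturbation of the vertex positions makes it transverse to $f$ in the sense of the text, by Thom transversality. On such a transverse triangulation, for a $k$-simplex $\sigma$ with $k\ge 1$, the restriction $f\colon f^{-1}(\sigma)\to\sigma$ is a smooth submersion of $(m-n+k)$-manifolds with corners, and the coarea formula gives
\[ \int_{f^{-1}(\sigma)}J_k(x)\,d\vol_{m-n+k}(x)=\int_\sigma\vol_{m-n}(f^{-1}(y))\,d\vol_k(y)\le V\cdot\vol_k(\sigma), \]
where $J_k$ is the $k$-dimensional Jacobian of $f|_{f^{-1}(\sigma)}$. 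A uniform positive lower bound on $J_k$ (quantitative transversality), which is generic in the perturbation, converts this into $\vol_{m-n+k}(f^{-1}(\sigma))\le K\cdot V\cdot\vol_k(\sigma)$ for a constant $K$ depending only on the transversality quality and on the derivative bound of $f$ on compact $M$. Taking $\delta$ sufficiently small makes the right-hand side smaller than $\epsilon$ simultaneously for every positive-dimensional simplex.

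\medskip

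The main obstacle is the uniform fiber-volume bound of the first step: without real analyticity, pointwise finiteness of fibers need not imply a global bound, and it is precisely here (through Hardt's theorem and o-minimal tameness) that the analytic hypothesis is essential. The coarea estimate itself is routine once both the bound $V$ and a quantitatively transverse triangulation are in hand, although some care is needed to ensure that the transversality constant does not degenerate when the triangulation is made fine.
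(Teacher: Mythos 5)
First, a point of comparison: the paper does not prove this lemma at all --- it is quoted from Gromov (\cite{gromov_filling}, $F_2''$ on p.~134), so your proposal has to be judged on its own. The fatal problem is your third step. Transversality of a simplex $\sigma$ to $f$ does \emph{not} give any positive lower bound on the Jacobian $J_k$ of the restriction $f\colon f^{-1}(\sigma)\to\sigma$ when $f$ has critical points (unavoidable for $m\ge n\ge 2$ unless $f$ is a submersion), and the linear estimate $\vol_{m-n+k}(f^{-1}(\sigma))\le K\cdot V\cdot\vol_k(\sigma)$ with $K$ independent of the simplex is simply false. A fold already kills it: for $f(u,v)=(u,v^2)$ and the vertical edge $\sigma=\{c\}\times[\delta^2,\delta]$, both endpoints are regular values and $\sigma$ is transverse to $f$, every fiber has at most two points, yet $f^{-1}(\sigma)$ consists of two arcs of total length about $2\sqrt{\delta}$, while $\vol_1(\sigma)\approx\delta$; likewise a triangle of diameter $\delta$ straddling the fold values has preimage of area about $\delta^{3/2}$ versus the claimed $O(\delta^2)$. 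Edges and triangles meeting the critical values cannot be avoided in a fine triangulation, so the ``transversality quality'' you invoke necessarily degenerates as the mesh is refined --- the caveat you mention at the end is not a technical point but exactly where the argument breaks. Note that in the example the preimage volumes still tend to $0$, so the lemma itself survives; but it cannot be reached through a pointwise Jacobian lower bound. The standard route (Gromov's, and the related constructions in the literature the paper cites) is an averaging argument: fibre a neighbourhood of each simplex by parallel copies $\sigma_s$, apply the Eilenberg--coarea inequality to the composition of $f$ with the projection onto the parameter $s$, so that the integral over $s$ of $\vol_{m-n+k}(f^{-1}(\sigma_s))$ is controlled by $\vol_m$ of the preimage of a thin neighbourhood, which is small because preimages of points and of lower-dimensional sets are $m$-null; then choose generic positions for all simplices simultaneously. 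No uniform lower Jacobian bound, and no linear isoperimetric-type inequality, enters.

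Your first step is a true statement but as written it is not justified: Hardt's theorem gives local triviality by subanalytic \emph{homeomorphisms}, which control neither volumes nor their behaviour as one approaches lower-dimensional strata, so ``locally bounded on each stratum'' does not by itself yield a global bound. The uniform bound does hold --- pointwise finiteness of $\vol_{m-n}$ forces every fiber to have dimension at most $m-n$, and then a Cauchy--Crofton argument together with uniform finiteness for globally subanalytic families bounds the fiber volumes uniformly on the compact $N$ --- and this is indeed the place where real analyticity is used; so this step is repairable. The proof as a whole, however, needs to be reorganized around the averaging argument rather than around quantitative transversality.
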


The following theorem has an obvious generalization in case the homology vanishing and isoperimetric inequalities are available in more codimensions. 
The pleasant property about the Kazhdan property is that it implies the homology vanishing in codimension~$1$ for every finite cover via Poincaré duality.

\begin{theorem}\label{thm: iso to waist}
Let $M$ be a connected closed oriented $m$-dimensional Riemannian manifold satisfying
$H_{m-1}(M,\bbZ)=0$
and let $C>0$ be a constant such that $M$ satisfies the $C$-isoperimetric inequality in dimensions $m-1$ and $m-2$.
Then  for every $2$-dimensional closed oriented manifold $N$ and every real analytic $f\colon M\to N$ there exists a point $v\in N$ such that 
\begin{equation}\label{eq: explicit fiber estimate} \vol_{m-2}\bigl(f^{-1}(\{v\})\bigr)\ge \frac{1}{1+3C+12C^2}\cdot\vol_m(M).
\end{equation}
\end{theorem}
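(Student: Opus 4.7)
The plan is to argue by contradiction via a quantitative version of Gromov's filling method: if inequality~\eqref{eq: explicit fiber estimate} fails for every $v\in N$, I will exhibit a top-dimensional integral cycle in $M$ of mass strictly less than $\vol_m(M)$ that is non-trivial in $H_m(M,\bbZ)\cong\bbZ$. Such a cycle cannot exist, since any non-zero integral $m$-cycle on a closed oriented Riemannian $m$-manifold has mass at least $\vol_m(M)$.

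Set $\epsilon_0=\sup_{v\in N}\vol_{m-2}(f^{-1}(v))<\vol_m(M)/(1+3C+12C^2)$ and apply Lemma~\ref{lem: real analytic maps} to choose a triangulation $T$ of $N$ transverse to $f$ such that $\vol_{m-2+\dim\sigma}(f^{-1}(\sigma))<\epsilon$ for every positive-dimensional simplex $\sigma$, where $\epsilon>0$ is a parameter to be fixed at the end. Vertices of $T$ are regular values, so their fibers are bounded by $\epsilon_0$. Transversality makes $\sigma\mapsto f^{-1}(\sigma)$ into a chain map, shifted in degree by $m-2$, from the simplicial chain complex of $T$ into the Lipschitz chain complex of $M$, satisfying $\partial f^{-1}(\sigma)=f^{-1}(\partial\sigma)$.

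Fix a base vertex $v_\ast\in T$ and build the replacement cycle in three inductive layers, each invoking one of the two isoperimetric inequalities. At the vertex level, every difference $f^{-1}(v)-f^{-1}(v_\ast)$ is an $(m-2)$-boundary, filled by the preimage of any edge-path from $v_\ast$ to $v$; the $C$-isoperimetric inequality in dimension $m-2$ therefore yields $(m-1)$-chains $\alpha_v$ with $\partial\alpha_v=f^{-1}(v)-f^{-1}(v_\ast)$ and $\mass(\alpha_v)\le 2C\epsilon_0$, with $\alpha_{v_\ast}=0$. At the edge level, for $e$ with $\partial e=v_+-v_-$, the combination $\beta_e:=f^{-1}(e)-\alpha_{v_+}+\alpha_{v_-}$ is an $(m-1)$-cycle by a telescoping computation; since $H_{m-1}(M,\bbZ)=0$, the $C$-isoperimetric inequality in dimension $m-1$ yields $m$-chains $\gamma_e$ with $\partial\gamma_e=\beta_e$ and $\mass(\gamma_e)\le C\epsilon+4C^2\epsilon_0$. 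At the face level, for each $2$-simplex $\tau$ the chain $\delta_\tau:=f^{-1}(\tau)-\sum_e[\tau:e]\,\gamma_e$ is an $m$-cycle (the $\alpha$-contributions cancel by $\partial^2\tau=0$) of mass at most $(1+3C)\epsilon+12C^2\epsilon_0$.

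The non-triviality comes by summing over all $2$-simplices: because $N$ is a closed oriented surface, every edge lies in exactly two triangles with opposite induced orientations, so $\sum_\tau[\tau:e]=0$ and the $\gamma_e$-terms cancel, leaving $\sum_\tau\delta_\tau=f^{-1}([N])=[M]$. Hence at least one $\delta_{\tau^\ast}$ represents a non-zero class in $H_m(M,\bbZ)$, forcing $\mass(\delta_{\tau^\ast})\ge\vol_m(M)$. Combining this with the mass estimate, the bound on $\epsilon_0$, and letting $\epsilon\to 0$ yields $\vol_m(M)\le 12C^2\epsilon_0<\vol_m(M)$, the desired contradiction. The main obstacle is that individual fibers $f^{-1}(v)$ are only $(m-2)$-cycles, not boundaries, so the isoperimetric inequality cannot be used on them directly; anchoring the construction to the base vertex $v_\ast$ and exploiting connectedness of $N$ converts this into a genuine boundary problem, at the cost of doubling the $\alpha$-mass estimate --- which is precisely what yields $12C^2$ rather than $6C^2$ in the theorem's constant.
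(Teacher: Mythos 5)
Your proposal is correct and follows essentially the same route as the paper: fill the vertex-fiber differences via the dimension-$(m-2)$ isoperimetric inequality, fill the resulting edge-level cycles using $H_{m-1}(M,\bbZ)=0$ together with the dimension-$(m-1)$ inequality, and contradict the fact that a homologically nontrivial integral Lipschitz $m$-cycle has mass at least $\vol_m(M)$. The paper merely packages the same data as a chain homotopy between the preimage chain map $A_\ast$ and the constant map $B_\ast$ and uses a single smallness parameter $\epsilon_0$ instead of your auxiliary $\epsilon\to 0$; these are presentational differences only.
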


\begin{proof}
We may and will assume that $N$ is connected. Set $\epsilon_0\defq 1/(1+3C+12C^2)$. 
According to Lemma~\ref{lem: real analytic maps} let $T$ be a smooth oriented triangulation such that 
\begin{itemize}
    \item $f$ is transverse to~$T$, and  
    \item for every edge $e$ and every triangle $\Delta$ in $T$, $\vol_{m-1}(M_e)<\epsilon_0\cdot\vol_m(M)$ and $\vol_{m}(M_\Delta)<\epsilon_0\cdot\vol_m(M)$.
\end{itemize}
Here we denote the preimages of simplices by $M_\sigma\defq f^{-1}(\sigma)$ for $\sigma\in T$.
To prove the statement by contradiction, we assume that 
\begin{equation}\label{eq: contradiction hypothesis}
\vol_{m-2}(f^{-1}(v))<\epsilon_0\cdot\vol_m(M)
\end{equation}
for all $v\in N$, in particular for all vertices~$v$ of~$T$. 

In the following, all chain groups and homology groups are with integral coefficients. 
Next we construct a chain homomorphism 
\[ A_\ast\colon C_\ast^\cell(N)\to C_{m-2+\ast}^\lip(M)\]
that maps the fundamental class $[N]$ of $N$ to the one~$[M]$ of~$M$. The construction of $A_\ast$ is similar to the one in~\cite{meru}*{Proposition~3.6}.
For a simplex~$\sigma$ in~$T$, we write~$\sigma_i$ for its $i$-th face, so that $\partial \sigma=\sum_i (-1)^i\sigma_i$. 
For every vertex~$v$, choose a smooth triangulation of~$M_v$, and let $A_0(v)$ be its oriented simplicial fundamental chain, regarded as an integral Lipschitz chain in~$M$. Then
\[
\mass A_0(v)=\vol_{m-2}(M_v).
\]

Next, let $e$ be an oriented edge of~$T$. The chosen triangulations of $M_{e_0}$ and $M_{e_1}$ give a smooth triangulation of~$\partial M_e$. By the relative smooth triangulation theorem~\cite{munkres}*{Theorem~10.6}, it extends to a smooth triangulation of~$M_e$. Let $A_1(e)$ be its oriented simplicial fundamental chain. Then
\[
\partial A_1(e)=A_0(e_0)-A_0(e_1)
\quad\text{and}\quad
\mass A_1(e)=\vol_{m-1}(M_e).
\]

Finally, let $\Delta$ be an oriented triangle of~$T$. The triangulations chosen for its three edge preimages agree on their intersections and hence give a smooth triangulation of~$\partial M_\Delta$. After smoothing the corners of~$M_\Delta$, the same relative triangulation theorem extends it to a smooth triangulation of~$M_\Delta$. Let $A_2(\Delta)$ be its oriented simplicial fundamental chain. Then
\[
\partial A_2(\Delta)=A_1(\Delta_0)-A_1(\Delta_1)+A_1(\Delta_2)
\quad\text{and}\quad
\mass A_2(\Delta)=\vol_m(M_\Delta).
\]
Consequently, the $A_k$ define the desired chain homomorphism. The fundamental class of $N$ is represented by the sum of all oriented triangles. Its image under $A_2$ is a fundamental class for $M$ since for every $p\in M_\Delta$ its local orientation coincides with the one of $M_\Delta$.

We choose a vertex $v_0$ in $T$ as a base point. 
Now we define another chain map 
\[ B_\ast\colon C_\ast^\cell(N)\to C_{m-2+\ast}^\lip(M)\]
by setting $B_0(v)=A_0(v_0)$ for every vertex~$v$ and setting $B_k=0$ for $k>0$. 

Under the assumption~\eqref{eq: contradiction hypothesis} we will construct a chain homotopy $h_\ast\colon A_\ast\simeq B_\ast$. Because of $0\ne [M]=H_2(A_\ast)([N])=H_2(B_\ast)([N])$ this will give the desired contradiction. 

For $k=0$ and a vertex~$v$, the chain $A_0(v)-B_0(v)=A_0(v-v_0)=A_0(\partial w)=\partial A_1(w)$ is a boundary where $w$ is a $1$-chain connecting $v$ and~$v_0$. By~\eqref{eq: contradiction hypothesis} we obtain that \[\mass A_0(v-v_0)\le 2\cdot\epsilon_0\cdot\vol_m(M).\] 
The $C$-isoperimetric inequality in dimension $m-2$ implies that there  is $z_v\in C_{m-1}^\lip(M)$ with 
$\partial z_v=A_0(v-v_0)$ and $\mass z_v\le 2\cdot C\cdot \epsilon_0\cdot\vol_m(M)$. We define $h_0$ by setting $h_0(v)=z_v$ for each vertex~$v$. 

Let $e$ be an oriented edge in~$T$. 
Then 
\[ (A_1-B_1-h_0\circ\partial)(e)=A_1(e)-h_0(\partial e)\in C_{m-1}^\lip(M)\]
is a cycle. Because of $H_{m-1}(M)=0$ and the $C$-isoperimetric inequality in dimension~$m-1$ we can find $z_e\in C_m^\lip(M)$ such that $\partial z_e=(A_1-B_1-h_0\circ\partial)(e)$ and 
\begin{align*}
\mass (z_e)&\le C\cdot \mass \bigl(A_1(e)-h_0(\partial e)\bigr)\\
&< C\cdot \bigl( \epsilon_0\cdot \vol_m(M)+4\cdot C\cdot \epsilon_0\cdot \vol_m(M)\bigr)\\
&\le C\cdot (1+4C)\cdot \epsilon_0\cdot\vol_m(M).
\end{align*}
In the first line we used that $A_1(e)$ is a Lipschitz representative of the fundamental class of~$M_e$ and $\vol_{m-1}(M_e)<\epsilon_0\cdot\vol_m(M)$. 
We define $h_1$ by setting $h_1(e)=z_e$. 

Let $\Delta$ be a triangle  in~$T$. Then 
\[\bigl(A_2-B_2-h_1\circ\partial \bigr)(\Delta)=A_2(\Delta)-h_{1}(\partial \Delta)\in C_m^\lip(M)\]
is a cycle whose mass is bounded by 
\[
\mass\bigl(A_2(\Delta)-h_{1}(\partial \Delta)\bigr)<\epsilon_0\cdot (1+3C(1+4C))\cdot \vol_m(M)=\vol_m(M).
\]
Here we used that $A_2(\Delta)$ is a Lipschitz representative of the fundamental class of~$M_\Delta$ and $\vol_{m}(M_\Delta)<\epsilon_0\cdot\vol_m(M)$. 
As an integral Lipschitz cycle with mass less than $\vol_m(M)$ it is a boundary $\partial z_\Delta=A_2(\Delta)-h_{1}(\partial \Delta)$, and we can finish the construction of $h_\ast$ by setting $h_2(\Delta)=z_\Delta$. 
\end{proof}

If $\bar M\to M$ is a finite cover, then a waist inequality for $\bar M$ implies one with the same quality for $M$. So by passing to orientation covers we may assume that the manifold in the Main Theorem is oriented. 

\begin{proof}[Proof of the Main Theorem]
Let $M$ be a closed connected $m$-dimensional Riemannian manifold with Kazhdan fundamental group~$\Gamma$.

By Theorem~\ref{thm: geometric isoperimetric inequality} every finite cover $\bar M$ satisfies the $C$-isoperimetric inequality in dimensions $m-1$ and~$m-2$. 
The fundamental group $\bar \Gamma$ of a finite cover $\bar M$ is a finite index subgroup of~$\Gamma$ and is Kazhdan as well. By Poincaré duality we obtain that 
\[ H_{m-1}(\bar M,\bbZ)\cong H^1(\bar M,\bbZ)=H^1(\bar \Gamma,\bbZ).\]
Being a Kazhdan group implies that $H^1(\bar \Gamma,\bbR)=0$. By the universal coefficient theorem the first cohomology is torsion free. Hence $H_{m-1}(\bar M,\bbZ)=0$. 
Any real analytic map $\bar M\to \bbR^2$ can be regarded as a real analytic map $\bar M\to \bbR^2\hookrightarrow \bbR^2\cup \{\infty\}\cong S^2$. The Main Theorem now follows directly from Theorem~\ref{thm: iso to waist}. 
\end{proof}

\begin{bibdiv}
\begin{biblist}
\bib{meru}{article}{
   author={Alagalingam, Meru},
   title={Algebraic filling inequalities and cohomological width},
   journal={Algebr. Geom. Topol.},
   volume={19},
   date={2019},
   number={6},
   pages={2855--2898},
}

\bib{Alon-Milman}{article}{
    AUTHOR = {Alon, N.},
    AUTHOR = {Milman, V. D.},
     TITLE = {{$\lambda_1,$} isoperimetric inequalities for graphs, and
              superconcentrators},
   JOURNAL = {J. Combin. Theory Ser. B},
    VOLUME = {38},
      YEAR = {1985},
    NUMBER = {1},
     PAGES = {73--88},
}

\bib{BGM}{article}{
    AUTHOR = {Bader, U.},
    AUTHOR = {Gelander, T.},
    AUTHOR = {Monod, N.},
     TITLE = {A fixed point theorem for {$L^1$} spaces},
   JOURNAL = {Invent. Math.},
    VOLUME = {189},
      YEAR = {2012},
    NUMBER = {1},
     PAGES = {143--148},
}

\bib{BS}{article}{
      title={Higher Kazhdan property and unitary cohomology of arithmetic groups}, 
      author={Uri Bader},
      author={Roman Sauer},
      year={2023},
      eprint={2308.06517},
      archivePrefix={arXiv},
      url={https://arxiv.org/abs/2308.06517}, 
}
\bib{bekka+valette}{book}{
   author={Bekka, Bachir},
   author={de la Harpe, Pierre},
   author={Valette, Alain},
   title={Kazhdan's property (T)},
   series={New Mathematical Monographs},
   volume={11},
   publisher={Cambridge University Press, Cambridge},
   date={2008},
   pages={xiv+472},
}
\bib{buser}{article}{
   author={Buser, Peter},
   title={A note on the isoperimetric constant},
   journal={Ann. Sci. \'Ecole Norm. Sup. (4)},
   volume={15},
   date={1982},
   number={2},
   pages={213--230},
}
\bib{cheeger}{article}{
   author={Cheeger, Jeff},
   title={A lower bound for the smallest eigenvalue of the Laplacian},
   conference={
      title={Problems in analysis},
      address={Sympos. in honor of Salomon Bochner, Princeton Univ.,
      Princeton, N.J.},
      date={1969},
   },
   book={
      publisher={Princeton Univ. Press, Princeton, NJ},
   },
   date={1970},
   pages={195--199},
}
\bib{overlap}{article}{
   author={Dotterrer, Dominic},
   author={Kaufman, Tali},
   author={Wagner, Uli},
   title={On expansion and topological overlap},
   journal={Geom. Dedicata},
   volume={195},
   date={2018},
   pages={307--317},
}

\bib{DHK}{article}{
    AUTHOR = {Dey, Tamal K.},
    AUTHOR = {Hirani, Anil N.},
    AUTHOR = {Krishnamoorthy, Bala},
     TITLE = {Optimal homologous cycles, total unimodularity, and linear
              programming},
   JOURNAL = {SIAM J. Comput.},
  FJOURNAL = {SIAM Journal on Computing},
    VOLUME = {40},
      YEAR = {2011},
    NUMBER = {4},
     PAGES = {1026--1044},
}

\bib{epstein-book}{book}{
   author={Epstein, David B. A.},
   author={Cannon, James W.},
   author={Holt, Derek F.},
   author={Levy, Silvio V. F.},
   author={Paterson, Michael S.},
   author={Thurston, William P.},
   title={Word processing in groups},
   publisher={Jones and Bartlett Publishers, Boston, MA},
   date={1992},
   pages={xii+330},
}
\bib{fraczyk+lowe}{article}{
   author={Fraczyk, Mikolaj},
   author={Lowe, Ben},
   title={Minimal submanifolds and waists of locally symmetric spaces},
   note={Preprint}
}

\bib{gromov_filling}{article}{
   author={Gromov, Mikhael},
   title={Filling Riemannian manifolds},
   journal={J. Differential Geom.},
   volume={18},
   date={1983},
   number={1},
   pages={1--147},
}
\bib{gromov1}{article}{
   author={Gromov, Mikhail},
   title={Singularities, expanders and topology of maps. I. Homology versus
   volume in the spaces of cycles},
   journal={Geom. Funct. Anal.},
   volume={19},
   date={2009},
   number={3},
   pages={743--841},
}
\bib{gromov2}{article}{
   author={Gromov, Mikhail},
   title={Singularities, expanders and topology of maps. Part 2: From
   combinatorics to topology via algebraic isoperimetry},
   journal={Geom. Funct. Anal.},
   volume={20},
   date={2010},
   number={2},
   pages={416--526},
}

\bib{gromov_waist}{article}{
   author={Gromov, M.},
   title={Isoperimetry of waists and concentration of maps},
   journal={Geom. Funct. Anal.},
   volume={13},
   date={2003},
   number={1},
   pages={178--215},
}

\bib{guichardet}{book}{
   author={Guichardet, A.},
   title={Cohomologie des groupes topologiques et des alg\`ebres de Lie},
   language={French},
   series={Textes Math\'ematiques [Mathematical Texts]},
   volume={2},
   publisher={CEDIC, Paris},
   date={1980},
   pages={xvi+394},
}
		

\bib{heinrich}{article}{
   author={Heinrich, Stefan},
   title={Ultraproducts in Banach space theory},
   journal={J. Reine Angew. Math.},
   volume={313},
   date={1980},
   pages={72--104},
}

\bib{Hoffman-Kruskal}{incollection}{
    AUTHOR = {Hoffman, A. J.}, 
    AUTHOR = {Kruskal, J. B.},
     TITLE = {Integral boundary points of convex polyhedra},
 BOOKTITLE = {Linear inequalities and related systems},
    SERIES = {Ann. of Math. Stud.},
    VOLUME = {no. 38},
     PAGES = {223--246},
 PUBLISHER = {Princeton Univ. Press, Princeton, NJ},
      YEAR = {1956},

}

\bib{Kakutani}{article}{
    AUTHOR = {Kakutani, Shizuo},
     TITLE = {Concrete representation of abstract {$(L)$}-spaces and the
              mean ergodic theorem},
   JOURNAL = {Ann. of Math. (2)},
    VOLUME = {42},
      YEAR = {1941},
     PAGES = {523--537},
}

\bib{kielak+nowak}{misc}{
  author={Kielak, Dawid},
  author={Nowak, Piotr W.},
  title={Coboundary expansion and Gromov hyperbolicity},
  note={arXiv:2309.06215},
  date={2023},
}

\bib{margulis}{article}{
   author={Margulis, G. A.},
   title={Explicit constructions of expanders},
   language={Russian},
   journal={Problemy Pereda\v ci Informacii},
   volume={9},
   date={1973},
   number={4},
   pages={71--80},
}

\bib{munkres}{book}{
   author={Munkres, James R.},
   title={Elementary differential topology},
   note={Lectures given at Massachusetts Institute of Technology, Fall, 1961; revised edition},
   series={Annals of Mathematics Studies},
   volume={54},
   publisher={Princeton University Press, Princeton, N.J.},
   date={1966},
   pages={xi+112},
}

\end{biblist}
\end{bibdiv}

\end{document}